\documentclass[12pt,BCOR=9.20mm,paper=a4]{scrartcl}
\usepackage{ayv_options}
\newcommand{\ID}[1]{\operatorname{LD}(#1)}
\newcommand{\SL}[1]{\operatorname{SL}(#1)}

\title{Time-optimal problem in the space of probability measures}
\author{Yurii Averboukh}\address{HSE University, Moscow, Russia} \email{averboukh@gmail.com}
\author{Ekaterina Kolpakova}\address{Krasovskii Institute of Mathematics and Mechanics, \\ & Yekaterinburg, Russia}
		\email{eakolpakova@gmail.com}
\date{}

\begin{document}

	\maketitle
	\begin{abstract}
		This paper focuses on the value function in the time-optimal problem for a continuity equation in the space of probability measures. We derive the dynamic programming principle for this problem. In particular, we prove that the Kruzhkov transform of the value function is a unique discontinuous viscosity solution to the corresponding Dirichlet problem for the Hamilton–Jacobi equation. Finally, we establish the 
		$\Gamma$-convergence of the value function in a perturbed problem to the value function in the unperturbed problem.
			\keywords{time-optimal problem, Hamilton-Jacobi equation, controlled continuity equation, viscosity solution, Dirichlet problem, nonsmooth analysis in the Wasserstein space}
		\msccode{49J20, 49J52, 49L25, 49N70, 91A23, 49K20, 82C22}
	\end{abstract}
	

	\tableofcontents
	
	\section{Introduction}
	In this paper, we study a control system in the space of probability measures, where the dynamics is governed by a nonlocal continuity equation driven by an external force. This equation models the evolution of a system comprising infinitely many identical particles, with each particle influenced by the collective field generated by all other particles.
	
	The nonlocal continuity equation traces its origins to the pioneering work of Vlasov on plasma dynamics. Nowadays, it finds widespread application in various fields, including opinion dynamics models \cite{Carrillo2010, Naldi2010, Wang2022},  crowd behavior analysis \cite{Colombo2012, Dogbe2012, HUGHES2002507,Piccoli2018} and swarm dynamics~\cite{Fetecau2011, Mogilner1999}.
	
	The objective of the control is to steer the system to a  target in minimal time. The target here is assumed to be an arbitrary closed set of probability measures. In this work, we develop the dynamic programming approach and investigate the properties of the value function.
	
	The main results of the paper are as follows.
	First, we establish the existence of an optimal relaxed control. Next, we derive the dynamic programming principle for the value function in this setting.
	We then develop a framework based on the Dirichlet problem for the Hamilton-Jacobi equation, which, in this context, is a partial differential equation in the space of measures.  We demonstrate that the Kruzhkov transform of the value function satisfies the aforementioned Dirichlet problem in the viscosity sense.
	Furthermore, by deriving a comparison principle, we prove the existence and uniqueness of a solution to the Dirichlet problem associated with the time-optimal problem for a nonlocal continuity equation. In general, the value function is discontinuous; however, we identify the sufficient condition that ensures its continuity.
	Finally, we show that the values of the perturbed time-optimal problems 
	$\Gamma$-converge to the solution of the limiting problem.
	
	Now, let us give a brief overview of the existing literature and compare our results with what is already known. The study of control problems for the nonlocal continuity equation began with papers \cite{Brockett, Colombo}. Necessary optimality conditions in the form of the Pontryagin maximum principle for finite-horizon problems were derived in \cite{Bongini2017, Bonnet2019, Bonnet2021, Bonnet2019a, Pogodaev2016}.
	We should also highlight  paper \cite{Badreddine2021}, where the authors develop the dynamic programming principle for the optimal control problem of the continuity equation on a finite time interval.  In that work, it is shown that the value function restricted to the space of compactly supported measures is a solution to the Cauchy problem for the Hamilton-Jacobi equation in the space of probability measures. Moreover, the uniqueness of the solution is also established  on the set of compactly supported  probability measures.
	
	Notice that the problem studied in this paper assumes the system is influenced by an external force. Consequently, the velocity field is a continuous function of a point. An alternative approach considers the possibility of a discontinuous velocity field. This framework known  as the mean field type control theory is particularly relevant for systems involving intellectual agents striving to achieve a common goal.   For the current state of research in this direction, we refer to the works of \cite{Averboukh,Jimenez2020,Jimenez2023}. Time-optimal problems for mean field type control systems were investigated in \cite{Cavagnari2021a,Cavagnari2017,Cavagnari2015}, where the target is assumed to be a hyperplane in the space of probability measures.  In those papers, the value function was characterized as a solution to a Dirichlet problem for the Hamilton-Jacobi equation in the space of measures. Furthermore, sufficient conditions for the regularity of the value function were established. However, these works do not provide a uniqueness result.
	
	As previously mentioned, our approach to analyzing time-optimal problems relies on viscosity solutions to the Hamilton–Jacobi equation. This methodology was initially developed for finite-dimensional equations (see \cite{Bardi, crandall_lions, Subbotin}).
	
The concept of viscosity solutions has been extended in recent years to spaces of probability measures (see \cite{Badreddine2021, Cavagnari2021a, Cavagnari2015,cosso2024master, Gangbo2019,daudin2024comparison, Jimenez2020, Pham2018}). The definition of a viscosity solution inherently relies on the notions of subdifferentials and superdifferentials, which can themselves be introduced in several ways in the context of probability measures.
The central question here concerns the choice of admissible variations and the corresponding definition of sub- and superdifferentials. In our framework, admissible variations are Borel functions satisfying a sublinear growth condition. Consequently, elements of the sub- and superdifferentials belong to the dual class; specifically, they are functions whose product with any sublinearly growing variation is integrable. In general, these sub- and superdifferentials are larger than those in the approach originally proposed in \cite{Cardaliaguet_Quincampoix_2008}, where both admissible variations and elements of the sub-/superdifferentials are required to be square-integrable. Consequently, our definition of viscosity solution is slightly stronger than the one based solely on square-integrable variations. Nevertheless, the nonsmooth analysis developed here is particularly well-suited to the Dirichlet problem arising from time-optimal control of a nonlocal continuity equation. In particular, our proof of the comparison principle (Theorem~\ref{th1}) essentially uses the fact that variations are only assumed to satisfy a sublinear growth condition.

We now briefly review alternative approaches to defining sub-/superdifferentials and viscosity solutions for Hamilton-Jacobi equations in the Wasserstein space. One approach, employed in \cite{Badreddine2021} for finite-horizon optimal control problems with nonlocal continuity equations (see also \cite{Cardaliaguet_Quincampoix_2008}), considers variations of a measure along directions given by square-integrable functions. Another line of work uses directions defined by transport plans (see \cite{Ambrosio_Gangbo_2008, Cavagnari2015, Gangbo2019, Jimenez2020}); this is primarily applied to Hamilton-Jacobi equations associated with deterministic mean-field control problems. In \cite{ambrosio}, elements of subdifferentials are even allowed to be distributions. A recent contribution \cite{daudin2024comparison} introduces strong viscosity solutions for semilinear second-order Hamilton-Jacobi equations by leveraging entropy regularization.

All the approaches mentioned above are intrinsic, as they rely only on measures and directions within the space of measures. Alternatively, a probability measure with finite second moment can be represented as the push-forward of a square-integrable random variable, leading to an external approach to nonsmooth analysis in the space of probability measures (see \cite{Cardaliaguet2019, Gangbo2019, Pham2018,cosso2024master}). Connections between the external viewpoint and certain intrinsic definitions of sub- and superdifferentials are discussed in \cite{Cardaliaguet2019, Gangbo2019, Jimenez2024, Jimenez2023}. A comparative analysis of various intrinsic approaches can also be found in \cite{Jimenez2020, Jimenez2023}.
	
The paper is organized as follows. In Section~\ref{sect:prel}, we introduce the general notation used throughout the paper. The formulation of the time-optimal control problem for the nonlocal continuity equation is presented in Section~\ref{sect:statement}, where we also prove the existence of an optimal control and show that the value function is lower semicontinuous. Section~\ref{sect:dirichlet} introduces the Dirichlet problem for the Hamilton-Jacobi equation and defines the corresponding notion of a viscosity solution. The comparison principle for this problem is established in Section~\ref{sect:comparison}, which, in turn, implies the uniqueness of the viscosity solution. In Section~\ref{sect:characterization}, we prove that the Kruzhkov transform of the value function is a viscosity solution to the aforementioned Dirichlet problem. Additionally, we derive a sufficient condition ensuring the continuity of the value function and show that the values of perturbed time-optimal control problems $\Gamma$-converge to the original problem (see Section~\ref{sect:topological}). Finally, Section~\ref{sect:example} provides an example of a time-optimal control problem for the controlled continuity equation.

	\section{Preliminaries}\label{sect:prel}
	If $X_1,\ldots, X_n$ are sets, $i_1,\ldots, i_k\in \{1,\ldots,n\}$, then $\operatorname{p}^{i_1,\ldots,i_k}$ denotes a natural projection of $X_1\times\ldots\times X_n$ onto $X_{i_1}\times\ldots\times X_{i_k}$. Additionally, $\operatorname{Id}$ stands for the identity mapping.
	
	If $(\Omega',\mathcal{F}')$ and $(\Omega'',\mathcal{F}'')$ are measurable spaces, $m$ is a measure on $\mathcal{F}'$, and $h: \Omega'\rightarrow \Omega''$ is a $\mathcal{F}'/\mathcal{F}''$-measurable map, then we denote by $h\sharp m$ the push-forward measure defined by the rule: for each $\Gamma\in \mathcal{F}''$,
	$$ (h\sharp m)(\Gamma)=m(h^{-1}(\Gamma)).$$
	
	If $(X,\rho_X)$ is a Polish space, $x\in X$, $r>0$, then $\mathbb{B}_r(x)$ stands for the ball of radius~$r$ centered at $x$.
	
	Furthermore, $\mathcal{M}(X)$ denotes the space of positive finite  Borel measures on $X$, while $\mathcal{P}(X)$ stands for the space of Borel probabilities on $X$, i.e., $\mathcal{P}(X)=\{m\in\mathcal{M}(X):\, m(X)=1\}$.
	
	We consider the narrow convergence on $\mathcal{M}(X)$. This means that $\{m_n\}_{n=1}^\infty\subset \mathcal{M}(X)$ converges to $m\in\mathcal{M}(X)$ if, for every $\phi\in C_b(X)$, 
	\[\int_X\phi(x)m_n(dx)\rightarrow \int_X\phi(x)m(dx)\text{ as }n\rightarrow\infty.\] Obviously, $\mathcal{P}(X)$ is closed w.r.t. the narrow convergence.
	
	We denote the space of probability measures with the finite second moment by $\mathcal{P}^2(X)$, i.e., $m\in\mathcal{P}(X)$ belongs to $\mathcal{P}^2(X)$ provided that, for some $x_*\in X$, $\int_X\rho_X^2(x,x_*)m(dx)<\infty$. If $X$ is a Banach space, then we denote
	\[\varsigma(m)\triangleq \Bigg[\int_X\|x\|^2m(dx)\Bigg]^{1/2}.\]
	We endow $\mathcal{P}^2(X)$ with the second Wasserstein distance defined by the rule: $$W_2(m_1,m_2)=\Bigg[\inf\limits_{\pi\in\Pi(m_1,m_2)}\int_{X\times X}\big(\rho_X(x_1,x_2)\big)^2\pi(d(x_1,x_2))\Bigg]^{1/2},$$
	where   $\Pi(m_1,m_2) $ stands for the set of probabilities $\pi\in \mathcal{P}^2(X\times X)$ such that $\operatorname{p}^i\sharp \pi=m_i$.

	If, additionally, $Y$ is a Banach space, $m$ is a measure on $X$ and $p\geq 1$, then $L^p(X,m;Y)$ denotes the space of functions $\varphi:X\mapsto Y$ such that 
	\[\int_X\|\varphi(x)\|^p_Ym(dx)<\infty.\]  If $X=Y=\mathbb{R}^d$, $p=2$, we will shorten the notation and write $L^2(m)$ instead of $L^2(\mathbb{R}^d,m;\mathbb{R}^d)$.   The norm on $L^2(m)$ is denoted by $\|\cdot\|_{L^2(m)}$. Additionally, $L^0(X;Y)$ denotes the set of all measurable functions from $X$ to $Y$. 
	
	If $s_1,s_2:\rd\rightarrow\rd$ are such that the mapping $x\mapsto \langle s_1(x), s_2(x)\rangle$ lies in $L^1(m)$, then we put
	\[\langle s_1,s_2\rangle_m\triangleq \int_{\rd} \langle s_1(x),s_2(x)\rangle m(dx).\] Notice that  $\langle\cdot,\cdot\rangle_m$ is the inner product on $L^2(m)$.
	
If  $m\in\mathcal{P}^2(\rd)$, then $\SL{m}$ stands for the set  of Borel functions satisfying the sublinear growth condition, i.e., a Borel function $F:\rd\rightarrow\rd$ lies in $\SL{m}$ if, for some constant $C>0$, and $m$-a.e. $x\in\rd$,
\[\|F(x)\|\leq C(1+\|x\|).\]  Notice that $\SL{m}\subset L^2(m)$. 
We define the convergence on $\SL{m}$ in the following way: 
a sequence of functions  $\{F_n\}_{n=1}^\infty\subset \SL{m}$ converges to $F\in \SL{m}$ provided that
\begin{itemize}
	\item there exists a constant $C$ such that 
	\[\|F_n(x)\|\leq C(1+\|x\|)\text{ for }m\text{-a.e. }x\in\rd;\]
	\item $F_n(x)\rightarrow F(x)$ as $n\rightarrow\infty$ for $m$-a.e. $x\in \rd$.
\end{itemize} An analogous definition is used when we consider the convergence of a family of functions $\{F_h\}_{h\in (0,\epsilon)}$.

 The symbol $\ID{m}$ denotes the set of all functions $s:\rd\rightarrow\rd$ such that 
\[\int_{\rd} \|s(x)\| (1+\|x\|) m(dx)<\infty.\] 
One can regard $\ID{m}$ as the dual space to $\SL{m}$. In particular, for every $F\in \SL{m}$, $s\in\ID{m}$, $\langle s, F\rangle_m$ is well-defined. Moreover, if $\{F_n\}_{n=1}^\infty\subset \SL{m}$ converges to some $F\in \SL{m}$, then $\langle s,F_n\rangle_m\rightarrow \langle s,F\rangle_m$.
	
	Now let $(X,\rho_X)$,  $(Y,\rho_Y)$ be Polish spaces. Assume that  $m$ is a finite measure on $X$, while $\alpha\in \mathcal{M}(X\times Y)$ is such that  $\operatorname{p}^1\sharp \alpha=m$. Notice that, due to the disintegration theorem \cite[Theorem 10.4.14]{Bogachev2007}, one can find a family of probability measures  $\{\alpha(\cdot|x)\}_{x\in X}\subset \mathcal{P}(Y)$ such that, for every function $\phi\in L^1(X\times Y,\alpha;\mathbb{R})$,
	\begin{equation}
		\label{dis}
		\int_{X\times Y} \phi(x,y)\alpha(d(x,y))=\int_{X}\int_{Y} \phi(x,y)\alpha(dy|x)m(dx).
	\end{equation} Moreover, this family is  $m$-a.e. unique. 
	If $\{\alpha(\cdot|x)\}_{x\in X}$ is a family of probabilities on $Y$ that is weakly measurable, one can uniquely construct a measure $\alpha\in\mathcal{M}(X\times Y)$ such that $\operatorname{p}^1\sharp\alpha=m$ and~\eqref{dis} holds true.

	In what follows, given a compact metric space $U$, we will consider measures on $[0,+\infty) \times U$ whose marginal distributions on $[0,+\infty)$ coincide with the Lebesgue measure. The set of such measures is denoted by  $\mathcal{U}$. Elements of  $\mathcal{U}$ are regarded as  relaxed (generalized) controls.  If $\xi\in\mathcal{U}$, $T>0$, then $\xi|_T$ stands for its restriction  on $[0,T]\times U$, i.e., $\xi|_T$ is a measure on $[0,T]\times U$ such that its marginal distribution on $[0,T]$ is the Lebesgue measure. We denote the set of such measures by $\mathcal{U}_T$. The set $\mathcal{U}_T$ is closed w.r.t. the narrow convergence.
	\begin{definition}\label{prel:def:convergence_U} We say that a sequence $\{\xi_n\}_{n=1}^\infty\subset \mathcal{U}$ converges to $\xi\in\mathcal{U}$ provided that, for each $T>0$, $\{\xi_n|_T\}_{n=1}^\infty$ converges to $\{\xi|_T\}$ narrowly.
	\end{definition} 
	Notice that in this definition it suffices to consider only natural numbers $T$. Consequently, the space $\mathcal{U}$ is compact. Moreover, each measurable function $u:[0,+\infty)\rightarrow U$ generates an element of $\mathcal{U}$ such that $\xi(\cdot|t)\triangleq \delta_{u(t)}(\cdot)$. The set of such elements is dense in $\mathcal{U}$ \cite[Theorem IV.3.10]{Warga}.

	\section{Statement of the problem}\label{sect:statement}
The central object of this work is the controlled nonlocal continuity equation given by:
	\begin{equation} \label{dyn}
		\partial_t m(t)+\operatorname{div}(f(x,m(t),u(t))m(t))=0, \ \ t\in [0,+\infty).
	\end{equation}
	Here, $x\in \mathbb{R}^d$, $m(t)\in \mathcal{P}^2(\mathbb{R}^d)$,  $u(t)\in U$, the set $U$ is a metric compact. We interpret $u(t)$ as an instantaneous control. Thus, $U$ is the space of admissible controls. 
	
	Equation~\eqref{dyn} describes the evolution of the distribution of infinitely many similar particles  obeying  the following equation
	$$ \frac{d}{dt}x(t)=f(x(t),m(t),u(t)), \ x\in \mathbb{R}^d,  \ u(t)\in U.$$ Here, the distribution of agents at the instant $t$ is denoted $m(t)\in  \mathcal{P}^2(\mathbb{R}^d)$, while $u(t)$ stands for an external control that influences the whole system. 
	
We impose the following assumptions.
	\begin{hypothesis}\label{hyp1} The function $f$ is continuous in all arguments. \end{hypothesis}
	\begin{hypothesis}\label{hyp2} There exists a constant $L > 0 $ such that, for every $x_1,x_2\in \mathbb{R}^d$, $m_1,m_2\in \mathcal{P}^2(\mathbb{R}^d)$,  $u\in U$, one has the inequality $$ \|f(x_1,m_1,u)-f(x_2,m_2,u)\|\leq L(\|x_1-x_2\|+W_2(m_1,m_2)).$$ 
	\end{hypothesis}
	\begin{hypothesis}\label{hyp3} There exists a constant $C_f > 0 $ such that, for every $x\in \mathbb{R}^d$, $m\in \mathcal{P}^2(\mathbb{R}^d)$,  $u\in U$, one has the inequality $$ \|f(x,m,u)\|\leq C_f\Bigg(\mathscr{a}(x)+\int_{\rd}\mathscr{a}(x)m(dx)\Bigg),$$ 
	\end{hypothesis} where  \begin{equation}\label{intro:almost_norm}
		\mathscr{a}(x)\triangleq (1+\|x\|^2)^{1/2}.
	\end{equation}
	
	In the paper, we use relaxed controls. As  mentioned above, each relaxed control is a measure on $[0,+\infty)\times U$ with the marginal distribution on $[0,+\infty)$ coinciding with the Lebesgue measure. Furthermore, given $\xi\in \mathcal{U}$, there exists its disintegration w.r.t. the Lebesgue measure denoted by $\xi(\cdot|t)$. Formally, the usage of  relaxed controls means that we replace continuity equation~\eqref{dyn} with
	
	\begin{equation} \label{dyn_relaxed}
		\partial_t m(t)+\operatorname{div}\Bigg(\int_U f(x,m(t),u)\xi(du|t)\cdot m(t)\Bigg)=0, \ \ t\in [0,+\infty).
	\end{equation} We will consider this equation in the distributional sense.
	
	\begin{definition}\label{def1}
		Let $T>0$, $\xi\in\mathcal{U}_T$. A measure-valued function $m(\cdot): [0, T]\rightarrow \mathcal{P}^2(\mathbb{R}^d)$ is called a solution of equation \eqref{dyn_relaxed} on $[0,T]$ if, for every $\varphi\in C_0^1((0,T)\times\mathbb{R}^d)$,
		it holds that
		$$ \int_{[0,T]\times U}\int_{\mathbb{R}^d} \partial_t \varphi(t,x) + \left\langle \nabla \varphi(t,x), f(x,m(t),u)\right\rangle m(t,dx)\xi(d(t,u))=0.$$
		If $\xi\in\mathcal{U}$, then a measure-valued function $m(\cdot):[0,+\infty)\rightarrow\mathcal{P}^2(\mathbb{R}^d)$ is a solution of \eqref{dyn_relaxed} on $[0,+\infty)$ if, for every $T>0$, the restriction  of $m(\cdot)$ on $[0,T]$ is a solution of~\eqref{dyn_relaxed} on $[0,T]$. 
	\end{definition} 
	
	Given $\mu\in\mathcal{P}^2(\mathbb{R}^d)$, $\xi\in\mathcal{U}$, we denote by $m(\cdot;\mu,\xi)$ the   unique solution of~\eqref{dyn_relaxed}  on  $[0,+\infty)$ satisfying $m(0)=\mu$.   If $\xi\in\mathcal{U}_T$, we will, by a slight abuse of notation, also denote the solution of~\eqref{dyn_relaxed} on $[0,T]$ satisfying $m(0)=\mu$ by $m(\cdot;\mu,\xi)$.
	
	An alternative definition of the solution to continuity equation~\eqref{dyn_relaxed} relies on the particle interpretation. To introduce it, for given $y\in \mathbb{R}^d $, a continuous function $m(\cdot):[0,T]\rightarrow \mathcal{P}^2(\mathbb{R}^d )$, $\xi\in \mathcal{U}_T$, we denote by $X(\cdot;y,m(\cdot),\xi)$ the solution of the initial value problem
	\begin{equation*}\label{dyn2}
		\frac{d}{dt}x(t)=\int_{U} f(x(t),m(t),u)\xi(du|t), \ x(0)=y.
	\end{equation*}
	The function $X(\cdot;y,m(\cdot),\xi)$ describes the motion of a particle on the time interval $[0,T]$ in the case where the distribution of all particles is given by $m(\cdot)$, while the system is affected by a relaxed control $\xi$. The motion on $[0,+\infty)$ can be defined in the same way. 
	
	From the superposition principle \cite[Theorem 8.2.1]{ambrosio}, a measure-valued function $m(\cdot):[0,T]\rightarrow\mathcal{P}^2(\mathbb{R}^d)$ solves~\eqref{dyn_relaxed} for some $\xi\in\mathcal{U}$ if and only if 
	\[m(t)=X(t;\cdot,m(\cdot),\xi)\sharp m(0).\]
	
	Now, let us describe the time-optimal problem examined in the paper.
	We assume that we are provided with  a closed set $M\subset \mathcal{P}^2(\mathbb{R}^d)$. Furthermore, we denote  $G\triangleq\mathcal{P}^2(\mathbb{R}^d)\setminus M$. Additionally,  $\partial G$ stands for the border of $G$. Let us consider the  functional $\tau$ defined on $C([0,+\infty);\mathcal{P}^2(\mathbb{R}^d))$ such that
	\begin{equation}  \label{funcl}
		\tau(m(\cdot))\triangleq \inf\{t\in[0;+\infty): \ m(t)\in M\}.
	\end{equation}
	Notice that, if, for every $t>0$, $m(t)\not\in M$, then $\tau(m(\cdot))=+\infty$. The time-optimal problem means that, given an initial distribution $\mu\in\mathcal{P}^2(\mathbb{R}^d)$, one wishes to minimize the quantity $\tau(m(\cdot;\mu,\xi))$. Thus, the value function of  time-optimal problem \eqref{dyn}, \eqref{funcl} is defined by the rule: 
	\begin{equation*} \label{val}
		\operatorname{Val}(\mu)=\inf\{\tau(m(\cdot;\mu,\xi)):\ \ \xi\in\mathcal{U}\}.
	\end{equation*}
	
	Notice that while this definition formally involves the relaxed continuity equation~\eqref{dyn_relaxed}, the value function can be equivalently defined using usual controls and the dynamics~\eqref{dyn2}, as shown in Remark~\ref{remark:value} below.
	
	\subsection{Properties of the value function}
	The properties of the value function in a time-optimal problem  in $\mathbb{R}^d$ are well studied (see~\cite{Bardi,Subbotin} for a comprehensive review).
	It is  known that the value function is lower semicontinuous and satisfies the dynamic programming principle. In this section, we derive the analogous properties for the examined time-optimal control on the space of probability measures.
	
	First, let us recall the stability of the trajectories on each finite interval proved in~\cite{Averboukh23}.
	\begin{proposition}\label{lm0}
		Assume that, 
		for each $n$, $f_n$ is a  function from $\mathbb{R}^d\times\mathcal{P}^2(\mathbb{R}^d)\times U$ to $\mathbb{R}^d$; $\mu_n\in\mathcal{P}^2(\mathbb{R}^d)$, $\xi_n\in \mathcal{U}$ are such that
		\begin{itemize}
			\item each function $f_n$ is continuous w.r.t. all variables and Lipschitz continuous w.r.t. $x$ and $m$ for the constant $L$;
			\item there exists a continuous function $f:\mathbb{R}^d\times\mathcal{P}^2(\mathbb{R}^d)\times U\rightarrow \mathbb{R}^d$ that is Lipschitz continuous w.r.t. $x$ and $m$ such that, for each $c>0$, 
			\begin{equation*}
				\begin{split}
					\sup\Big\{\|f_n(x,&m,u)-f(x,m,u)\|:\\ &x\in\rd,\, m\in\mathcal{P}^2(\rd),\, \varsigma(m)\leq c,\, u\in U\Big\}\rightarrow 0\text{ as }n\rightarrow\infty;
				\end{split}
			\end{equation*}
			\item the sequences $\{\mu_n\}_{n=1}^\infty$    
			and $\{\xi_n\}_{n=1}^\infty$ converge to $\mu\in\mathcal{P}^2(\mathbb{R}^d)$ and $\xi\in\mathcal{U}$ respectively;
			\item for each natural $n$, $m_n(\cdot)$ is the solution of~\eqref{dyn_relaxed} for the dynamics equal to $f_n$, relaxed control $\xi_n$ such that $m_n(0)=\mu_n$.
		\end{itemize} Then, for every $T>0$,   the sequence of  $\{m_n(\cdot)\}_{n=1}^{\infty}$ converges to
		$m(\cdot;\mu,\xi)$ in $C([0,T];\mathcal{P}^2(\mathbb{R}^d))$.
	\end{proposition}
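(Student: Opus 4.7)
The plan is to lift each measure-valued solution $m_n(\cdot)$ to a probability on path space via the superposition principle (Proposition~\ref{prop:superposition}), then extract a narrowly convergent subsequence of these path-space measures, and identify the limit as the superposition of the limiting equation. Concretely, for each $n$ pick $\chi_n\in\mathcal{P}(C([0,T];\mathbb{R}^d))$ with $e_t\sharp\chi_n=m_n(t)$ and such that $\chi_n$-a.e. $x(\cdot)$ satisfies
\[x(t)=x(0)+\int_0^t\int_U f_n(x(s),m_n(s),u)\xi_n(du|s)\,ds.\]

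First I would establish uniform a priori bounds. Hypothesis~\ref{hyp2} and the uniform convergence $f_n\to f$ on bounded sets give a linear growth estimate $\|f_n(x,m,u)\|\le C(1+\|x\|+\varsigma(m))$ with $C$ independent of $n$. Combined with a Gronwall argument applied simultaneously to $\|x(t)\|$ along characteristics and to $\varsigma(m_n(t))^2=\int\|x(t)\|^2\chi_n(dx(\cdot))$, this produces a constant $R=R(T)$ such that $\varsigma(m_n(t))\le R$ and $\|x(t)\|\le R(1+\|x(0)\|)$ for $\chi_n$-a.e.\ curve on $[0,T]$. In particular every $\chi_n$-a.e.\ curve is $CR$-Lipschitz in $t$, so the family of curves is equicontinuous.

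Next I would prove tightness and narrow relative compactness of $\{\chi_n\}$ on $C([0,T];\mathbb{R}^d)$. Tightness of the initial marginals $\mu_n$ follows from their $W_2$-convergence (which implies uniform integrability of $\|x\|^2$), and equicontinuity of trajectories turns this into tightness of $\chi_n$ on path space via the Arzel\`a--Ascoli characterization of compacts in $C([0,T];\mathbb{R}^d)$. Pass to a subsequence $\chi_{n_k}\to\chi$ narrowly; then $m_{n_k}(t)=e_t\sharp\chi_{n_k}\to e_t\sharp\chi=:m(t)$ narrowly for each $t$, and the uniform second-moment bound upgrades narrow to $W_2$-convergence.

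The main obstacle is identifying the limit, i.e.\ showing that $\chi$-a.e.\ curve satisfies the characteristic equation driven by $f$, $m(\cdot)$, and $\xi$. I would test the fixed-point identity above against a bounded continuous functional of $x(\cdot)$ and integrate against $\chi_n$, reducing the question to passing to the limit in
\[\int_0^t\int_U f_{n_k}(x(s),m_{n_k}(s),u)\xi_{n_k}(du|s)\,ds.\]
Decompose the integrand as $f_{n_k}(x,m_{n_k},u)-f(x,m_{n_k},u)$ (uniformly small by the uniform convergence hypothesis on the bounded set $\{\varsigma(m)\le R\}$) plus $f(x,m_{n_k},u)-f(x,m(s),u)$ (controlled by $LW_2(m_{n_k}(s),m(s))\to 0$) plus $f(x(s),m(s),u)$, for which the joint narrow convergence of $\chi_{n_k}\otimes\xi_{n_k}|_T$ to $\chi\otimes\xi|_T$ together with continuity of $f$ allows passage to the limit. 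Uniqueness of solutions to the continuity equation for $(f,\xi)$ with initial datum $\mu$ (which follows from Hypothesis~\ref{hyp2} via the same superposition/Gronwall argument) implies that the whole sequence, not just a subsequence, converges to $m(\cdot;\mu,\xi)$. Finally, the uniform Lipschitz bound on trajectories yields an equicontinuity estimate $W_2(m_n(t),m_n(s))\le CR|t-s|$, which combined with pointwise $W_2$-convergence upgrades to convergence in $C([0,T];\mathcal{P}^2(\mathbb{R}^d))$.
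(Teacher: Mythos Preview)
The paper does not prove Proposition~\ref{lm0}; it is quoted as a known result from \cite{Averboukh23}. Your superposition-plus-tightness scheme is a standard and essentially correct route to such stability results, so there is nothing to compare against in this paper.

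Two technical points your sketch glosses over but that are routine to fix. First, upgrading narrow convergence of $e_t\sharp\chi_{n_k}$ to $W_2$-convergence requires uniform integrability of $\|x\|^2$ under $m_{n_k}(t)$, not merely a uniform second-moment bound; you get this from your trajectory estimate $\|x(t)\|\le R(1+\|x(0)\|)$ together with the uniform integrability of $\|\cdot\|^2$ under the initial data $\mu_n$ (a consequence of their $W_2$-convergence). Second, in the limit identification the integrand $(x(\cdot),s,u)\mapsto f(x(s),m(s),u)$ is continuous but only of linear growth, so narrow convergence of $\chi_{n_k}\otimes\xi_{n_k}|_T$ alone does not justify passage to the limit; one truncates and controls the tails via the uniform bound on $\int\sup_{t}\|x(t)\|^2\,\chi_n(dx(\cdot))$ implied by your a priori estimates. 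With these two adjustments the argument goes through.
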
 
	\begin{remark}\label{remark:value}
		This proposition and the fact that the set of relaxed controls corresponding to usual ones is dense in $\mathcal{U}$ imply that, given $\xi\in\mathcal{U}$ and $\mu\in\mathcal{P}^2(\rd)$, there exists a sequence  $\{m_n(\cdot)\}_{n=1}^\infty$ such that, for each $n$, $m_n(\cdot)$ satisfies
		\[\partial_t m_n(t)+\operatorname{div}\big(f(x,m_n(t),u_n(t))m_n(t)\big)=0,\ \ m_n(0)=\mu\] for some $u_n(\cdot)\in L^0([0,+\infty);U)$, whereas
		$m_n(\cdot)\rightarrow m(\cdot;\mu,\xi)$ as $n\rightarrow\infty$ uniformly on each time interval $[0,T]$. In particular,
		\begin{equation*}
			\begin{split}
				\operatorname{Val}(\mu)=\lim_{\varepsilon\downarrow 0}\inf\Big\{\tau^\varepsilon(m(\cdot)):\, m(\cdot)\text{ satisfies }\partial_t m(t)+\operatorname{div}(f(&x,m(t),u(t))m(t))=0,\\ m(0)=\mu&,\ \ u(\cdot)\in L^0([0,+\infty);U)\Big\},\end{split}
		\end{equation*} where the mapping $\tau^\varepsilon(\cdot)$ assigns to a trajectory $m(\cdot)\in C([0,+\infty);\mathcal{P}^2(\rd))$ the quantity $\inf\{t\in [0,+\infty):\, m(t)\in M+\mathbb{B}_\varepsilon\}$. 
	\end{remark}
	
	Now, we establish the existence of an optimal control whenever the value function is finite.
	
	\begin{theorem}\label{th:optima_existence}
		Assume that $\mu\in \operatorname{cl}G $ satisfies $\operatorname{Val}(\mu)<+\infty$. Then, there exists a relaxed control $\xi_*\in \mathcal{U}$ such that 
		\[\operatorname{Val}(\mu)=\tau(m(\cdot;\mu,\xi_*)).\]
	\end{theorem}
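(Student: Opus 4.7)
The plan is to extract a minimizing sequence of relaxed controls, use compactness of $\mathcal{U}$ to pass to a convergent subsequence, and then use the stability result in Proposition~\ref{lm0} together with the closedness of $M$ to show that the limit achieves the value.

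First I would fix a minimizing sequence $\{\xi_n\}_{n=1}^\infty\subset\mathcal{U}$ with $t_n\triangleq\tau(m(\cdot;\mu,\xi_n))\to\operatorname{Val}(\mu)$. Set $m_n(\cdot)\triangleq m(\cdot;\mu,\xi_n)$. Since $\operatorname{Val}(\mu)<+\infty$, the sequence $\{t_n\}$ is bounded; choose $T>\operatorname{Val}(\mu)$ with $t_n\le T$ for every $n$. Because each trajectory $m_n(\cdot)$ is continuous on $[0,T]$ and $M$ is closed, the set $\{t\in[0,T]:m_n(t)\in M\}$ is closed and, being non-empty with infimum $t_n$, contains $t_n$; thus $m_n(t_n)\in M$ for each $n$.

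Next I would invoke compactness of $\mathcal{U}$ (noted right after Definition~\ref{prel:def:convergence_U}) to extract a subsequence, not relabeled, with $\xi_n\to\xi_*$ in $\mathcal{U}$. Applying Proposition~\ref{lm0} with $f_n\equiv f$ and $\mu_n\equiv\mu$ yields
\[
m_n(\cdot)\longrightarrow m_*(\cdot)\triangleq m(\cdot;\mu,\xi_*)\quad\text{in }C([0,T];\mathcal{P}^2(\mathbb{R}^d)).
\]
Combining uniform convergence on $[0,T]$ with continuity of $m_*(\cdot)$ and $t_n\to\operatorname{Val}(\mu)$ gives
\[
W_2\bigl(m_n(t_n),m_*(\operatorname{Val}(\mu))\bigr)\le \sup_{t\in[0,T]}W_2(m_n(t),m_*(t))+W_2\bigl(m_*(t_n),m_*(\operatorname{Val}(\mu))\bigr)\longrightarrow 0.
\]
Since $m_n(t_n)\in M$ for every $n$ and $M$ is closed in $\mathcal{P}^2(\mathbb{R}^d)$, the limit satisfies $m_*(\operatorname{Val}(\mu))\in M$, hence $\tau(m_*(\cdot))\le\operatorname{Val}(\mu)$. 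The reverse inequality $\tau(m_*(\cdot))\ge\operatorname{Val}(\mu)$ is immediate from the definition~\eqref{val} of the value function, so equality follows.

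The only delicate point is the lower semicontinuity argument for $\tau$: one must simultaneously pass to the limit in the varying times $t_n$ and in the varying trajectories $m_n(\cdot)$. This is handled exactly by the uniform (in $t$) convergence in $C([0,T];\mathcal{P}^2(\mathbb{R}^d))$ supplied by Proposition~\ref{lm0}, together with the closedness of $M$; no further regularity of the target is required.
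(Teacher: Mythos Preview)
Your proof is correct and follows essentially the same route as the paper's own proof: extract a minimizing sequence, pass to a subsequence by compactness of $\mathcal{U}$, use Proposition~\ref{lm0} for convergence of trajectories, and conclude via closedness of $M$. If anything, your version is slightly more careful in justifying that $m_n(t_n)\in M$ and in spelling out the triangle-inequality step for $m_n(t_n)\to m_*(\operatorname{Val}(\mu))$.
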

	\begin{remark}
		We say that  a relaxed control $\xi_*\in \mathcal{U}$ is optimal if  
		\[\operatorname{Val}(\mu)=\tau(m(\cdot;\mu,\xi_*)).\]
	\end{remark}
	\begin{proof}[Proof of Theorem~\ref{th:optima_existence}]
		By the definition of the value function, there exists a sequence $\{\xi_n\}_{n=1}^\infty\subset \mathcal{U}$ such that 
		\[\operatorname{Val}(\mu)=\tau_*\triangleq\lim_{n\rightarrow\infty}\tau(m(\cdot;\mu,\xi_n)).\] Furthermore, we denote $\tau_n\triangleq \tau(m(\cdot;\mu,\xi_n)). $ Since $\mathcal{U}$ is compact, without loss of generality, one may assume that $\{\xi_n\}_{n=1}^\infty$ converges to some control $\xi_*\in\mathcal{U}$. Proposition~\ref{lm0} implies that 
		$\{m(\cdot;\mu,\xi_n)\}_{n=1}^\infty$ converges to $m(\cdot;\mu,\xi_*)$. Since $m(\tau_n;\mu,\xi_n)\in M$, passing to the limit and using the closeness of the target set $M$, we conclude that 
		\[m(\tau_*;\mu,\xi_*)\in M.\] Therefore, $\operatorname{Val}(\mu)=\tau(m(\cdot;\mu,\xi_*))$.
	\end{proof}
	
	\begin{proposition}\label{pr1}
		The value function for problem \eqref{dyn}, \eqref{funcl} is lower semicontinuous. 
	\end{proposition}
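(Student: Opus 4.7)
The plan is to take a sequence $\mu_n \to \mu$ in $\mathcal{P}^2(\mathbb{R}^d)$ and show that $\operatorname{Val}(\mu) \le \liminf_{n\to\infty}\operatorname{Val}(\mu_n)$. Denote $c \triangleq \liminf_{n\to\infty}\operatorname{Val}(\mu_n)$. If $c = +\infty$, the inequality is trivial, so assume $c < +\infty$. Passing to a subsequence (not relabelled), we may suppose that $\operatorname{Val}(\mu_n)$ converges to $c$ and that, from some index onward, $\operatorname{Val}(\mu_n) < +\infty$.

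Next, I would invoke Theorem~\ref{th:optima_existence} to select, for each sufficiently large $n$, an optimal relaxed control $\xi_n \in \mathcal{U}$ such that $\operatorname{Val}(\mu_n) = \tau(m(\cdot;\mu_n,\xi_n))$. Set $\tau_n \triangleq \tau(m(\cdot;\mu_n,\xi_n))$, so that $\tau_n \to c$. Because the space $\mathcal{U}$ is compact (as noted after Definition~\ref{prel:def:convergence_U}), extracting a further subsequence if necessary, we may assume that $\xi_n$ converges to some $\xi_* \in \mathcal{U}$. By Proposition~\ref{lm0}, applied with $f_n \equiv f$, the trajectories $m(\cdot;\mu_n,\xi_n)$ converge to $m(\cdot;\mu,\xi_*)$ in $C([0,T];\mathcal{P}^2(\mathbb{R}^d))$ for every $T > 0$.

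It remains to pass to the limit in the inclusion $m(\tau_n;\mu_n,\xi_n) \in M$. Fix any $T > c$; for $n$ large we have $\tau_n \in [0,T]$, and the uniform convergence of $m(\cdot;\mu_n,\xi_n)$ to $m(\cdot;\mu,\xi_*)$ on $[0,T]$, together with continuity of $t \mapsto m(t;\mu,\xi_*)$, yields
\[W_2\bigl(m(\tau_n;\mu_n,\xi_n),\, m(c;\mu,\xi_*)\bigr) \to 0.\]
Since $M$ is closed in $\mathcal{P}^2(\mathbb{R}^d)$ and each $m(\tau_n;\mu_n,\xi_n)$ lies in $M$, the limit $m(c;\mu,\xi_*)$ also belongs to $M$. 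Consequently, $\tau(m(\cdot;\mu,\xi_*)) \le c$, and by the definition of the value function,
\[\operatorname{Val}(\mu) \le \tau(m(\cdot;\mu,\xi_*)) \le c = \liminf_{n\to\infty}\operatorname{Val}(\mu_n),\]
which is precisely the lower semicontinuity claim.

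The main obstacle I anticipate is the passage $m(\tau_n;\mu_n,\xi_n) \to m(c;\mu,\xi_*)$: one must combine convergence at moving time instants $\tau_n$ with convergence of the trajectories on a fixed interval. This is handled by a standard triangle-inequality estimate using both the uniform convergence provided by Proposition~\ref{lm0} and the continuity of the limit trajectory, so the argument is clean once the right finite horizon $T > c$ is fixed.
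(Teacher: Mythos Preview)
Your proof is correct and follows essentially the same route as the paper's: reduce to a subsequence with finite limiting value, use Theorem~\ref{th:optima_existence} to pick optimal controls, extract a convergent subsequence in the compact space $\mathcal{U}$, apply Proposition~\ref{lm0} to pass to the limit in the trajectories, and conclude via the closedness of $M$. Your treatment of the moving-time convergence $m(\tau_n;\mu_n,\xi_n)\to m(c;\mu,\xi_*)$ is in fact spelled out a bit more carefully than in the paper, but the argument is the same.
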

	\begin{proof}
		Let $\{\mu_k\}_{k=1}^\infty\subset \operatorname{cl} G $  be a sequence of initial conditions for problem \eqref{dyn} converging to $\mu_0\in  \operatorname{cl} G$. Denote $T_k\triangleq \operatorname{Val}(\mu_k)$. First,  we consider the case where $\{T_k\}_{k=1}^\infty$ is bounded. Without loss of generality, we assume that the sequence $\{T_k\}_{k=1}^\infty$ converges to some $T_0\in [0,+\infty)$.
		Furthermore, due to Theorem~\ref{th:optima_existence}, for each $k$ there exists a  relaxed control $\xi_k$ such that,  $T_k= \tau(m(\cdot;\mu_k,\xi_k))$.  Again, without loss of generality, we assume that the whole sequence $\{\xi_k\}_{k=1}^\infty$ converges to $\xi_0\in \mathcal{U}$.   Proposition \ref{lm0} gives that the probabilities $m(T_k;\mu_k,\xi_k)$ tend to $m(T_0;\mu_0,\xi_0)$ as $k\rightarrow\infty$.
		Since the set $M$ is closed, while $m(T_{k};\mu_{k},\xi_{k})\in M$, we have that $m(T_0;\mu_0, \xi_0)\in M$. Thus, using the choice of $T_k$, we conclude that 
		\[\lim_{k\rightarrow\infty}\operatorname{Val}(\mu_k)=T_0\geq \tau(m(\cdot;\mu_0,\xi_0))\geq \operatorname{Val}(\mu_0).\]
		
		If \[\lim_{k\rightarrow\infty}T_k=+\infty,\]  one obviously has that 
		\[\lim_{k\rightarrow\infty}\operatorname{Val}(\mu_k)\geq \operatorname{Val}(\mu_0).\]
		
	\end{proof}
	
	\subsection{Dynamic programming principle}
	
	\begin{theorem}\label{th:dynprog}
		The value function for  problem \eqref{dyn}, \eqref{funcl}  satisfies the dynamic programming principle on $\operatorname{cl}G$, i.e., for every $\mu\in \operatorname{cl} G$ and $h\in [0, \operatorname{Val}(\mu)]$,
		$$  \operatorname{Val}(\mu)-h=\inf_{\xi\in \mathcal{U}_{h}} \operatorname{Val}(m(h;\mu,\xi)).$$
	\end{theorem}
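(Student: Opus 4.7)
The plan is to prove the dynamic programming principle via the two standard inequalities, using the existence of optimal relaxed controls (Theorem~\ref{th:optima_existence}) together with natural time-restriction, time-shifting, and concatenation operations on $\mathcal{U}$.

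First, I would establish the inequality $\operatorname{Val}(\mu)-h\geq \inf_{\xi\in\mathcal{U}_h}\operatorname{Val}(m(h;\mu,\xi))$. Since $h\leq \operatorname{Val}(\mu)<+\infty$ (the case $\operatorname{Val}(\mu)=+\infty$ is vacuous), Theorem~\ref{th:optima_existence} produces an optimal relaxed control $\xi_*\in\mathcal{U}$ with $\tau(m(\cdot;\mu,\xi_*))=\operatorname{Val}(\mu)$. Its restriction $\xi_*|_h\in\mathcal{U}_h$ drives the system to $\mu_h\triangleq m(h;\mu,\xi_*)$. Next I would introduce the time-shift operation $\sigma_h:[h,+\infty)\times U\to[0,+\infty)\times U$, $(t,u)\mapsto(t-h,u)$, and set $\xi_*^{(h)}\triangleq \sigma_h\sharp(\xi_*\llcorner([h,+\infty)\times U))\in\mathcal{U}$. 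The disintegration of $\xi_*^{(h)}$ with respect to Lebesgue measure is $\xi_*(\cdot|s+h)$, so by a direct verification of Definition~\ref{def1}, the measure-valued curve $s\mapsto m(s+h;\mu,\xi_*)$ solves~\eqref{dyn_relaxed} with control $\xi_*^{(h)}$ and initial value $\mu_h$; uniqueness then yields $m(s;\mu_h,\xi_*^{(h)})=m(s+h;\mu,\xi_*)$. At $s=\operatorname{Val}(\mu)-h$ this measure lies in $M$, hence $\operatorname{Val}(\mu_h)\leq \operatorname{Val}(\mu)-h$, which gives the desired inequality after passing to the infimum over~$\xi$.

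For the reverse inequality, I would fix $\xi\in\mathcal{U}_h$, set $\mu_h\triangleq m(h;\mu,\xi)$, and take an arbitrary $\eta\in\mathcal{U}$. Define the concatenation $\xi\diamond_h\eta\in\mathcal{U}$ by prescribing its disintegration as $(\xi\diamond_h\eta)(\cdot|t)=\xi(\cdot|t)$ for $t\in[0,h]$ and $(\xi\diamond_h\eta)(\cdot|t)=\eta(\cdot|t-h)$ for $t>h$; this family is weakly measurable, so it indeed defines an element of $\mathcal{U}$. Gluing the trajectories $m(\cdot;\mu,\xi)$ on $[0,h]$ and $m(\cdot-h;\mu_h,\eta)$ on $[h,+\infty)$ produces a continuous measure-valued curve which, by the same distributional verification (and uniqueness applied on every $[0,T]$), coincides with $m(\cdot;\mu,\xi\diamond_h\eta)$. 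Consequently, if $\eta$ is chosen so that $\tau(m(\cdot;\mu_h,\eta))\leq \operatorname{Val}(\mu_h)+\varepsilon$, then $\tau(m(\cdot;\mu,\xi\diamond_h\eta))\leq h+\operatorname{Val}(\mu_h)+\varepsilon$, so $\operatorname{Val}(\mu)\leq h+\operatorname{Val}(\mu_h)+\varepsilon$. Letting $\varepsilon\downarrow 0$ and infimizing over $\xi\in\mathcal{U}_h$ yields the opposite inequality.

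I expect the main technical obstacle to be the concatenation step: verifying that $\xi\diamond_h\eta$ belongs to $\mathcal{U}$ (in particular that its disintegration is jointly measurable) and, more importantly, that the glued measure-valued curve is a distributional solution of~\eqref{dyn_relaxed} on all of $[0,+\infty)$. The issue is that the pasted curve is continuous at $t=h$ but its derivative has a jump, so one must split the test function integral at $t=h$ and cancel the boundary terms using the continuity of $m(\cdot)$ at $h$, together with the fact that $\varphi\in C_0^\infty((0,T);\rd)$ is compactly supported in time. Everything else (time-restriction, time-shift, and the inclusion of $\tau$-values after passing to a limit) is routine and relies only on the uniqueness of solutions to~\eqref{dyn_relaxed} established earlier in the paper.
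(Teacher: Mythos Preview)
Your argument is correct and rests on the same core device as the paper: the concatenation $\xi\diamond_h\eta$ together with the identity $m(s;m(h;\mu,\xi),\eta)=m(s+h;\mu,\xi\diamond_h\eta)$, which yields $\tau(m(\cdot;\mu,\xi\diamond_h\eta))=h+\tau(m(\cdot;m(h;\mu,\xi),\eta))$ whenever $h\le\operatorname{Val}(\mu)$. The paper simply packages both directions into a single chain of equalities
\[
\inf_{\xi\in\mathcal{U}_h}\operatorname{Val}(m(h;\mu,\xi))
=\inf_{\xi\in\mathcal{U}_h}\inf_{\eta\in\mathcal{U}}\tau(m(\cdot;\mu,\xi\diamond_h\eta))-h
=\inf_{\zeta\in\mathcal{U}}\tau(m(\cdot;\mu,\zeta))-h
=\operatorname{Val}(\mu)-h,
\]
using that every $\zeta\in\mathcal{U}$ splits as $\zeta|_h\diamond_h(\sigma_h\sharp\zeta)$, whereas you establish the two inequalities separately and invoke Theorem~\ref{th:optima_existence} for the ``$\ge$'' part. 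That extra appeal to existence of an optimal control is not needed (an $\varepsilon$-optimal control would do), but it does no harm. Your discussion of the distributional verification at the gluing time $t=h$ is more careful than what the paper writes; the paper takes this identity for granted.
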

	\begin{proof} If $\xi\in\mathcal{U}_h$, $\eta\in\mathcal{U}$, then we denote by $\xi\diamond_h\eta$ the relaxed control determined by its disintegration as follows:
		\[(\xi\diamond_h\eta)(\cdot|t)\triangleq\begin{cases}
			\xi(\cdot|t), & t\in [0,h),\\
			\eta(\cdot|t-h), & t\in [h,+\infty).
		\end{cases}\]
		We have that
		\begin{equation*}
			\begin{split}
				\inf\limits_{\xi\in \mathcal{U}_h}\operatorname{Val} (m(h;\mu,\xi))&=\inf\limits_{\xi\in \mathcal{U}_{h}}\inf\limits_{\eta\in \mathcal{U}} \tau(m(\cdot;m(h;\mu,\xi),\eta))\\
				&=\inf\limits_{\xi\in \mathcal{U}_{h}}\inf\limits_{\eta\in \mathcal{U}}\tau(m(\cdot;\mu,\xi\diamond_h\eta))-h\\
				&=\inf\limits_{\xi\in \mathcal{U}}\tau(m(\cdot;\mu,\xi))-h=\operatorname{Val}(\mu)-h.
			\end{split}
		\end{equation*}
		
	\end{proof}

	\section{The Dirichlet problem for the Hamilton-Jacobi equation}\label{sect:dirichlet}
	In this section, given $m\in\mathcal{P}^2(\rd)$ we  use the set of  functions satisfying sublinear growth condition $\SL{m}$ and its dual $\ID{m}$ introduced in Section~\ref{sect:prel}. 
	
	We define the  Hamiltonian by the following rule: for every $m\in\mathcal{P}^2(\mathbb{R}^d)$ and $s\in \ID{m}$,
	$$H(m,s)=\inf\limits_{u\in U} \int_{\mathbb{R}^d}\left\langle s(x), f(x,m,u)\right\rangle m(dx).$$
	
	The value function of problem \eqref{dyn}, \eqref{funcl} can have infinite value. So, we apply the Kruzhkov transform and define the function $\phi:\operatorname{cl}G\rightarrow \mathbb{R}$ by the rule: \begin{equation}\label{intro:Kruzhkov}
		\phi(m)=1-e^{-\operatorname{Val}(m)}.
	\end{equation} 
	Notice that $\phi$ takes values in $[0,1]$. Thus, we arrive at the following  Dirichlet problem for the Hamilton-Jacobi equation and the transformed value function:
	\begin{equation} \label{eq:hj}
		H(m,\nabla\phi(m))+1-\phi(m)=0, \, m\in G;
	\end{equation}
	\begin{equation}\label{bound:hj}
		\phi(m)=0,\, m\in \partial G.
	\end{equation}
	Here, we formally use $\nabla\phi(m)$ to designate the derivative of the function $\phi$. 
	Below, we develop a viscosity solution approach based on nonsmooth analysis in the Wasserstein space, utilizing only sublinear variations.

Let $m\in \mathcal{P}^2(\rd)$, $r>0$, and let $\psi$ be a function defined on~$\mathbb{B}_r(m)$ with values in $\mathbb{R}\cup\{+\infty,-\infty\}$ such that $\psi(m)\in\mathbb{R}$.
	\begin{definition}\label{def:sub_super_differential}
		A subdifferential of the function $\psi$ at the measure $m$ consists of all functions $s\in \ID{m}$ such that, for every $F\in \SL{m}$, 
		\begin{equation}\label{intro:subdiff}
				\liminf_{h\downarrow 0, 
				F'\in \SL{m},\, F'\rightarrow F} \frac{\psi((\operatorname{Id}+hF')\sharp m)-\psi(m)- h\langle s,F\rangle_m}{h}\geq 0.
		\end{equation}  We denote the subdifferential of the function $\psi$ at the measure $m$ by $\partial^-\phi(m)$. 
		
		Similarly, the superdifferential of the $\psi$ at the measure $m$ denoted by $\partial^+\phi(m)$ contains all functions $s\in \ID{m}$ satisfying the following condition: given $F\in \SL{m}$,
		\begin{equation}\label{intro:superdiff}
		\limsup_{h\downarrow 0, 
		F'\in \SL{m},\, F'\rightarrow F} \frac{\psi((\operatorname{Id}+hF')\sharp m)-\psi(m)- h\langle s,F\rangle_m}{h}\leq 0.
		\end{equation}
	\end{definition}
	\begin{remark}\label{remark:sub_super_diff_eq}
		An alternative definition of the sub-/superdifferentials involves the concept of Hadamard directional derivatives.  Let a measure $m$, a radius $r$ and a function $\psi$ be as above. A Hadamard lower directional derivative of the function $\psi$ at the probability $m$ in a direction $F\in \SL{m}$ is 
		\[ d^-_H\psi(m;F)\triangleq\liminf\limits_{\begin{subarray}{c}
				h\downarrow 0   \\
				F'\in \SL{m}, F'\rightarrow F
		\end{subarray}} \frac{\psi((\operatorname{Id}+hF')\sharp m)-\psi(m)}{h} . \]
		Similarly, a Hadamard upper directional derivative of the function $\psi$ at a probability $m\in G$ in a direction $F\in \SL{m}$ is equal to \[ d^+_H\psi(m;F)\triangleq\limsup\limits_{\begin{subarray}{c}
				h\downarrow 0,   \\
				F'\in \SL{m}, F'\rightarrow F
		\end{subarray}} \frac{\psi((\operatorname{\operatorname{Id}}+hF')\sharp m)-\psi(m)}{h} . \]
		One can easily show that 
		\[\partial^-\psi(m)=\big\{s\in \ID{m}:\, \left\langle s,F\right\rangle_m \leq d^-_H\psi(m;F)\big\},\]
		\[\partial^+\psi(m)=\big\{s\in \ID{m}:\, \left\langle s,F\right\rangle_m \geq d^+_H\psi(m;F)\big\}.\]  
	\end{remark}
	
	\begin{remark}\label{remark:sum:subdiff}
	Notice that from the very definition of the subdifferential (see~~\eqref{intro:subdiff}) it directly follows that, if $\psi_1,\psi_2: \mathbb{B}_r(m)\rightarrow\mathbb{R}\cup\{+\infty,-\infty\}$ for some positive number $r$ are such that $\psi_1(m)$ and $\psi_2(m)$ are finite, then
	\begin{equation*}\label{incl:subdiff_sum}
		\partial^-\psi_1(m)+\partial^-\psi_2(m)\subset \partial^-(\psi_1+\psi_2)(m).
	\end{equation*} Analogously,~\eqref{intro:superdiff} implies the inclusion
	\begin{equation*}
		\partial^+\psi_1(m)+\partial^+\psi_2(m)\subset \partial^+(\psi_1+\psi_2)(m).
	\end{equation*}
	\end{remark}
	\begin{remark}\label{remark:null} As in the finite dimensional case, one has that, if $m\in\mathcal{P}^2(\rd)$ and $\psi:\mathbb{B}_r(m)\rightarrow \mathbb{R}\cup \{+\infty\}$ are such that $\psi(m)<\infty$ and $m$ is a local minimizer for the function $\psi$, then 
		\[\mathbf{0}\in \partial^-\psi(m);\] hereinafter $\mathbf{0}$ stands for the function from $\rd$ to $\rd$ equal to zero everywhere.
	\end{remark}
	
	Our approach to the definition of the viscosity solution to the Dirichlet problem is close to one proposed in \cite[Definition 18.3, Theorem 18.6]{Subbotin} and \cite[\S IV.3]{Bardi} for the finite dimensional case.  In particular, we postulate that the viscosity solution coincides with the supersolution. Notice that in \cite{Bardi} this notion is called an envelope viscosity solution, while in \cite{Subbotin} the words `minimax solution' are used.
	
	\begin{definition}\label{def:supersolution}
		A bounded  lower semicontinuous function $\phi:\operatorname{cl} G\rightarrow \mathbb{R}$ is called a viscosity supersolution of Hamilton-Jacobi equation \eqref{eq:hj}   provided that,
		 for every $m\in G$ and $s\in \partial^-\phi(m)$, $$H(m,s)+1-\phi(m)\leq 0.$$ We say that a bounded lower semicontinuous function $\phi:\operatorname{cl} G\rightarrow \mathbb{R}$ is called a viscosity supersolution of Dirichlet problem equation \eqref{eq:hj},~\eqref{bound:hj} if it is a viscosity supersolution of \eqref{eq:hj} and satisfies~\eqref{bound:hj}. 
	\end{definition}
	\begin{definition}\label{def:subsolution}
		A bounded  upper semicontinuous function $\phi:\operatorname{cl} G\rightarrow \mathbb{R}$ is called a viscosity subsolution of Hamilton-Jacobi equation \eqref{eq:hj}   if, for every $m\in G$ and $s\in \partial^+\phi(m)$, one has that
			$$H(m,s)+1-\phi(m)\geq 0.$$
		 We say that a bounded upper semicontinuous function $\phi:\operatorname{cl} G\rightarrow \mathbb{R}$ is called a viscosity subsolution of Dirichlet problem equation \eqref{eq:hj},~\eqref{bound:hj} if it is a viscosity subsolution of \eqref{eq:hj} and satisfies~\eqref{bound:hj}. 
	\end{definition}
	\begin{remark} If $\phi$ is a nonnegative viscosity subsolution of \eqref{eq:hj},~\eqref{bound:hj}, then it is continuous at every point of $\partial G$.
	\end{remark}

	\begin{definition}\label{def:solution}
		A  bounded lower semicontinuos function $\phi: \operatorname{cl}G\rightarrow\mathbb{R}$ is called a viscosity solution of problem \eqref{eq:hj}, \eqref{bound:hj} if 
		\begin{itemize}
			\item $\phi$  is a supersolution of problem \eqref{eq:hj}, \eqref{bound:hj};
			\item there exists a sequence of functions $\{\phi_k\}_{k=1}^\infty$ defined on $\operatorname{cl}G$ with values in $\mathbb{R}$ such that each function $\phi_k$ is a subsolution of \eqref{eq:hj}, \eqref{bound:hj} while, for every $m\in G$, one has that 
			\[\phi(m)=\lim_{k\rightarrow\infty}\phi_k(m).\]\end{itemize}
	\end{definition}
	
Let us conclude this section with a discussion of the proposed definition of a viscosity solution. Our approach is based on sub- and superdifferentials. A key feature of our definition is that we restrict admissible variations to those satisfying a sublinear growth condition. Consequently, elements of the sub- and superdifferentials are permitted to grow faster than square-integrable functions. This differs from the frameworks in \cite[Chapter 10]{ambrosio} and \cite{Cardaliaguet_Quincampoix_2008,Badreddine2021,Jimenez2020,Jimenez2023}, where variations and elements of sub-/superdifferentials are required to be square-integrable (or, more generally, where variations belong to $L^p$
and sub-/superdifferentials to $L^q$).


Our motivation is twofold. First, in the proof of the comparison principle (Theorem~\ref{th1}), we rely heavily on a functional whose sub-/superdifferential contains an element that grows faster than an $L^2$
function. Second, when verifying that the Kruzhkov transform of the value function is a viscosity solution of~\eqref{eq:hj},~\eqref{bound:hj} (see Theorem~\ref{th:character}), we  use only variations of sublinear growth. Hence, if the controlled continuity equation satisfies the standard sublinear growth condition, the requirement of square-integrable variations is unnecessary.

The notion of viscosity solution given in Definition~\ref{def:solution} appears stronger than those following \cite{Cardaliaguet_Quincampoix_2008,Badreddine2021}, where sub-/superdifferentials are defined via square-integrable variations. Whether one can define a viscosity solution to the Dirichlet problem \eqref{eq:hj},~\eqref{bound:hj} while admitting general  $L^2$
variations and still preserving the two main results (the comparison principle and the property that the Kruzhkov transform of the value function satisfies~\eqref{eq:hj},~\eqref{bound:hj}) remains an intriguing open question. Additionally, it would be interesting to develop an approach based on strong viscosity solutions in the spirit of \cite{Crandall1985,Jimenez2020,Jimenez2023}.
	
	\section{Comparison principle}\label{sect:comparison}
	\begin{theorem}\label{th1}
	Let $\phi_1:\operatorname{cl}G\rightarrow \mathbb{R}$ be a subsolution of problem \eqref{eq:hj} and let $\phi_2:\operatorname{cl}G\rightarrow \mathbb{R}$ be
	a supersolution of problem \eqref{eq:hj}. Assume that $\phi_1\leq \phi_2$ on $\partial G$. Then, for each  $m\in \operatorname{cl}G$, $\phi_1(m)\leq \phi_2(m).$ 
	\end{theorem}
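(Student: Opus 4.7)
The plan is to adapt the Crandall--Lions doubling of variables technique to the Wasserstein setting, using inequality~\eqref{ineq:subdiff_2_wass} as the substitute for the first-order expansion of the squared Euclidean distance. Suppose for contradiction that
\[M:=\sup\limits_{m\in\operatorname{cl}G}\bigl(\phi_2(m)-\phi_1(m)\bigr)>0.\]
Since $\phi_1$ and $\phi_2$ both vanish on $\partial G$, $\phi_2$ is continuous on $\partial G$, and $\phi_1$ is lower semicontinuous, the upper-semicontinuous difference $\phi_2-\phi_1$ satisfies $\limsup_{m\to \partial G}(\phi_2-\phi_1)(m)\leq 0$; consequently, $M$ is not approached near $\partial G$. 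Introduce the penalized doubled functional
\[\Phi_{\varepsilon,\eta}(m_1,m_2):=\phi_2(m_1)-\phi_1(m_2)-\frac{1}{2\varepsilon}W_2^2(m_1,m_2)-\eta\bigl(\Psi(m_1)+\Psi(m_2)\bigr),\]
with $\Psi(m):=\int_{\mathbb R^d}(1+\|x\|^2)^{1+\beta/2}\,m(dx)$ for some $\beta>0$, which supplies the uniform integrability needed for $W_2$-precompactness of sublevel sets. Applying Ekeland's variational principle in the complete metric space $(\mathcal{P}^2(\mathbb R^d),W_2)$ then produces exact maximizers $(m_1^\varepsilon,m_2^\varepsilon)\in G\times G$ of a small perturbation of $\Phi_{\varepsilon,\eta}$, and a routine diagonal argument gives $W_2^2(m_1^\varepsilon,m_2^\varepsilon)/\varepsilon\to 0$ and $\phi_2(m_1^\varepsilon)-\phi_1(m_2^\varepsilon)\to M$ as $\varepsilon,\eta\downarrow 0$.

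Let $\vartheta^\varepsilon\in\Pi(m_1^\varepsilon,m_2^\varepsilon)$ be an optimal plan. Substituting $m_1=(\operatorname{Id}+hF)\sharp m_1^\varepsilon$ into the maximum inequality for $\Phi_{\varepsilon,\eta}$ and bounding the Wasserstein increment by~\eqref{ineq:subdiff_2_wass}, one obtains, for every $F\in L^2(m_1^\varepsilon)$,
\[\phi_2\bigl((\operatorname{Id}+hF)\sharp m_1^\varepsilon\bigr)-\phi_2(m_1^\varepsilon)\leq h\Bigl\langle \tfrac{1}{\varepsilon}\hat{\vartheta}^\varepsilon+r_1^\varepsilon,\,F\Bigr\rangle_{m_1^\varepsilon}+o(h),\]
where $\hat{\vartheta}^\varepsilon(x)=\int_{\mathbb R^d}(x-y)\vartheta^\varepsilon(dy|x)$ and $r_1^\varepsilon$ collects the $O(\eta)$ contribution from the moment penalty and the $O(\delta)$ correction from the Ekeland perturbation. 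Hence $s_2^\varepsilon:=\frac{1}{\varepsilon}\hat{\vartheta}^\varepsilon+r_1^\varepsilon\in\partial^+\phi_2(m_1^\varepsilon)$. The symmetric variation in $m_2$, combined with~\eqref{ineq:subdiff_2_wass} applied with the roles of $\mu$ and $\nu$ swapped, produces $s_1^\varepsilon\in\partial^-\phi_1(m_2^\varepsilon)$ of the form
\[s_1^\varepsilon(y)=\frac{1}{\varepsilon}\int_{\mathbb R^d}(x-y)\vartheta^\varepsilon(dx|y)+r_2^\varepsilon(y),\]
with $r_2^\varepsilon$ of the same order as $r_1^\varepsilon$.

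Feeding $s_2^\varepsilon$ and $s_1^\varepsilon$ into the sub- and supersolution inequalities and subtracting yields
\[\phi_2(m_1^\varepsilon)-\phi_1(m_2^\varepsilon)\leq H(m_1^\varepsilon,s_2^\varepsilon)-H(m_2^\varepsilon,s_1^\varepsilon).\]
Selecting $u_*\in U$ that approximately attains the infimum defining $H(m_2^\varepsilon,s_1^\varepsilon)$ and using the disintegration of $\vartheta^\varepsilon$ to rewrite both Hamiltonian integrals against the plan $\vartheta^\varepsilon$, the right-hand side is bounded by
\[\frac{1}{\varepsilon}\int_{\mathbb R^d\times\mathbb R^d}\langle x-y,\,f(x,m_1^\varepsilon,u_*)-f(y,m_2^\varepsilon,u_*)\rangle\,\vartheta^\varepsilon(d(x,y))+o(1).\]
Hypothesis~\ref{hyp2} together with Cauchy--Schwarz bounds the integral by $2L\,W_2^2(m_1^\varepsilon,m_2^\varepsilon)$, so the whole right-hand side tends to $0$ in the diagonal limit, contradicting $\phi_2(m_1^\varepsilon)-\phi_1(m_2^\varepsilon)\to M>0$.

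The main obstacle is that $(\mathcal{P}^2(\mathbb R^d),W_2)$ is not locally compact, so the doubled functional need not attain its supremum directly; one has to introduce a moment penalty (to force $W_2$-precompactness of sublevel sets) and Ekeland's principle (to obtain sharp maximizers), and then carefully track the corresponding correction terms $r_1^\varepsilon,r_2^\varepsilon$ in the super- and subdifferentials. Guaranteeing that these corrections enter the Hamiltonian comparison only as lower-order contributions while simultaneously securing $W_2^2(m_1^\varepsilon,m_2^\varepsilon)/\varepsilon\to 0$ is the delicate technical core of the argument.
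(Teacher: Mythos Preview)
Your outline is the standard Crandall--Lions doubling with a $1/(2\varepsilon)$ penalty, a higher-moment penalty for $W_2$-compactness, and Ekeland's principle, followed by the limit $\varepsilon,\eta,\delta\downarrow 0$. The skeleton is sound, but two ingredients interact badly as written. First, the functional $\Psi(m)=\int(1+\|x\|^2)^{1+\beta/2}\,m(dx)$ is $+\infty$ on most of $\mathcal P^2(\mathbb R^d)$, and even where it is finite its formal gradient $x\mapsto (2+\beta)(1+\|x\|^2)^{\beta/2}x$ need not lie in $L^2(m)$; hence the ``correction'' $r_1^\varepsilon$ is not an element of $L^2(m_1^\varepsilon)$ and the inclusion $s_2^\varepsilon\in\partial^+\phi_2(m_1^\varepsilon)$ is unjustified. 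Second, the Ekeland perturbation $\delta\,W_2(\cdot,m_1^\varepsilon)$ is not differentiable at its base point, so it does not add a single function to the super/subdifferential but rather inflates it by an $L^2$-ball of radius $\delta$; absorbing this into the Hamiltonian requires a uniform bound on $\|f(\cdot,m_i^\varepsilon,u)\|_{L^2(m_i^\varepsilon)}$, which itself presupposes the moment control. (Incidentally, if the moment penalty does force $W_2$-precompact superlevel sets, then the u.s.c.\ functional $\Phi_{\varepsilon,\eta}$ attains its maximum outright and Ekeland is superfluous.) These issues are fixable, but the sketch does not address them.

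The paper's argument is genuinely different and avoids all of this. It fixes an arbitrary $\hat m\in G$, uses the \emph{fixed} weight $\tfrac12$ in $\Phi(m_1,m_2)=\phi_2(m_2)-\phi_1(m_1)-\tfrac12 W_2^2(m_1,m_2)$ (no $\varepsilon$), and applies the Borwein--Preiss principle with the \emph{quadratic} gauge $\rho(m_1,m_2;\mu_1,\mu_2)=(4L-1)\bigl(W_2^2(m_1,\mu_1)+W_2^2(m_2,\mu_2)\bigr)$. Because the gauge is smooth in the sense of~\eqref{ineq:subdiff_2_wass}, its contribution to the sub/superdifferentials at the approximate maximizer $(m_1^*,m_2^*)$ is exactly $\mathbf 0$, so one obtains clean inclusions $-s_1\in\partial^-\phi_1(m_1^*)$ and $s_2\in\partial^+\phi_2(m_2^*)$ with the $s_i$ given by barycenters of an optimal plan---no correction terms, no moment penalty. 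The zero-order term $+1-\phi$ in~\eqref{hj} then yields $\phi_2(m_2^*)-\phi_1(m_1^*)\le 2L\,W_2^2(m_1^*,m_2^*)$ directly, and the coefficient $4L-1$ in the gauge is tuned precisely so that the Borwein--Preiss lower bound absorbs this via the triangle inequality, giving $\phi_2(\hat m)-\phi_1(\hat m)\le 0$ \emph{without any limiting procedure}. Your route is more generic (it would adapt to Hamiltonians lacking the discount structure $+1-\phi$); the paper's route buys a one-shot, non-asymptotic proof with no auxiliary penalties to track.
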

	Before we proceed with the proof of the comparison principle, let us formulate the uniqueness result.

	\begin{corollary}\label{corollary:uniqueness} There exists at most one solution of Dirichlet problem~\eqref{eq:hj}, \eqref{bound:hj}.
	\end{corollary}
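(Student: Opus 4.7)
The plan is to exploit the asymmetry in Definition~\ref{def:solution}: a viscosity solution is simultaneously a supersolution and the pointwise limit of a sequence of subsolutions. Suppose $\phi$ and $\phi'$ are two viscosity solutions of~\eqref{hj}. I will show $\phi=\phi'$ on $\operatorname{cl} G$ by establishing both inequalities separately.

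To prove $\phi\geq\phi'$, I invoke the approximation part of the definition applied to $\phi'$: there exists a sequence of subsolutions $\{\phi'_k\}_{k=1}^\infty$ of~\eqref{hj} with $\phi'_k(m)\to\phi'(m)$ for each $m\in G$. Since $\phi$ is, in particular, a supersolution, Theorem~\ref{th1} applied to the pair $(\phi,\phi'_k)$ yields $\phi(m)\geq \phi'_k(m)$ for every $m\in \operatorname{cl} G$ and every $k$. Passing to the limit as $k\to\infty$ on $G$ gives $\phi(m)\geq \phi'(m)$ for every $m\in G$, and on $\partial G$ both values are $0$ by the boundary condition embedded in the sub-/supersolution definitions. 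Interchanging the roles of $\phi$ and $\phi'$ gives the reverse inequality, hence $\phi=\phi'$.

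There is essentially no substantive obstacle beyond careful bookkeeping: the comparison principle of Theorem~\ref{th1} is stated between an arbitrary supersolution and an arbitrary subsolution (with values in $[0,1]$, which is guaranteed because these functions arise as the Kruzhkov transform and the definitions impose boundedness plus the boundary condition $\phi_i=0$ on $\partial G$; one should briefly verify that the constructed subsolutions $\phi'_k$ indeed take values in $[0,1]$, or alternatively truncate them, to be safe in applying the theorem). The only delicate point worth flagging is that Definition~\ref{def:solution} only guarantees pointwise convergence $\phi'_k\to\phi'$ on $G$, not on $\partial G$, but this is harmless since on $\partial G$ every sub- and supersolution equals $0$ by definition, so the inequality $\phi\geq\phi'$ is trivial there. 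Thus the corollary is an immediate consequence of Theorem~\ref{th1} and Definition~\ref{def:solution}.
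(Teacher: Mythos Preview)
Your argument is correct and is precisely the intended one: the paper itself records the corollary as an immediate consequence of Theorem~\ref{th1} and Definition~\ref{def:solution}, and your unpacking of that implication (apply comparison to each approximating subsolution, pass to the limit on $G$, use the boundary condition on $\partial G$, then swap roles) is exactly how it goes. Your side remark about the $[0,1]$ range in Theorem~\ref{th1} is harmless, since the proof of that theorem only uses boundedness of $\phi_1,\phi_2$, which is built into the definitions of sub- and supersolution.
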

	This statement directly follows from		Theorem~\ref{th1} and Definition~\ref{def:solution}.
	
	The proof of the comparison principle relies on the differentiability properties of the squared Wasserstein distance \cite[Theorem 10.2.2]{ambrosio} as well as the  properties of a moment gauge function that will be used to regularize the difference  $\phi_1-\phi_2$. We postpone the discussion of  this function until Lemmas~\ref{B:lm:g},~\ref{B:lm:derivative_g} and now we recall \cite[Theorem 10.2.2]{ambrosio}. It states that, given $m',m,m_b\in \mathcal{P}^2(\rd)$, $\pi\in\Pi(m,m')$, $\vartheta[m,m_b]\in \Pi(m,m_b)$ that is an optimal plan between $m$ and $m_b$, and $\varpi\in\mathcal{P}((\rd)^3)$ such that $\operatorname{p}^{1,2}\sharp\varpi=\pi$, while $\operatorname{p}^{1,3}\sharp\varpi=\vartheta$, one has that 
	\begin{equation*}\begin{split}\frac{1}{2}W^2_2(m',m_b)-\frac{1}{2}W^2_2(m,m_b)-\int_{(\rd)^3}\langle x-y,x'-&x\rangle\varpi(d(x,x',y))\\&\leq \int_{(\rd)^2}\|x'-x\|^2\pi(d(x,x')).\end{split}\end{equation*} Letting $m'=(\operatorname{Id}+hF')\sharp m$ in this inequality, where $F'\in L^2(m)$, we obtain  
	\begin{equation}\label{ineq:subdiff_2_wass}
		\frac{1}{2}W^2_2((\operatorname{Id}+hF')\sharp m,m_b)-\frac{1}{2}W^2_2(m,m_b)-h\langle \hat{\vartheta}[m,m_b],F'\rangle_m\leq h^2\|F'\|^2_{L^2(m)}.
	\end{equation} Here, $\hat{\vartheta}[m,m_b]\in L^2(m)\subset \ID{m}$ is a barycenter of the plan $\vartheta$ defined by the rule:
	\begin{equation}\label{intro:hat_vartheta}
		\hat{\vartheta}[m,m_b](x)\triangleq \int_{\rd}(x-y)\vartheta[m,m_b](dy|x).
	\end{equation} 
	Notice that 
	\begin{equation}\label{ineq:norm_vartheta}
		\|\hat{\vartheta}[m,m_b]\|_{L^2(m)}=W_2(m,m_b).
	\end{equation} Since one can consider only $F'\in\SL{m}$, while the convergence in~$\SL{m}$ yields the convergence in~$L^2(m)$, we have that $\hat{\vartheta}[m,m_b]$ is an element of the superdifferential of the function $m'\mapsto\frac{1}{2}W_2^2(m',m_b)$ at $m$ in the sense of Definition~\ref{def:sub_super_differential}.

Now let us discuss the moment gauge function, which plays a crucial role in the proof of the comparison principle. In what follows,  the function $\mathscr{a}$ is defined by~\eqref{intro:almost_norm}.
	\begin{lemma}\label{B:lm:g} Let $m\in\mathcal{P}^2(\rd)$. Then, there exists a continuously differentiable function $g:(0,+\infty)\rightarrow (0,+\infty)$ such that 
		\begin{enumerate}[label=(G\arabic*)]
			\item\label{B:cond:G_positive} $g$ is positive, monotonously increasing on $[1,+\infty)$ with $g(r)\rightarrow +\infty$ as $r\rightarrow +\infty$;
			\item\label{B:cond:G_int} $\int_{\rd}\mathscr{a}^2(x)g(\mathscr{a}(x))m(dx)<\infty$;
			\item\label{B:cond:G_derivative} $rg'(r)\leq g(r)$, where $g'$ stands for the derivative of the function $g$.
		\end{enumerate}
	\end{lemma}

	\begin{lemma}\label{B:lm:derivative_g} Let $m\in \mathcal{P}^2(\rd)$, and let $g:(0,+\infty)\rightarrow (0,+\infty)$ satisfy conditions \ref{B:cond:G_positive}--\ref{B:cond:G_derivative}. Denote
		\begin{equation}\label{intro:V_g}V_g(m)\triangleq \int_{\rd}\mathscr{a}^2(x)g(\mathscr{a}(x))m(dx).\end{equation}
		Then, the function $\mathscr{j}_g(\cdot)$ defined by the rule:
		\[\mathscr{j}_g(x)\triangleq  \big[2g(\mathscr{a}(x))+\mathscr{a}(x)g'(\mathscr{a}(x))\big]x\] lies in the intersection of $\partial^-V_g(m)$ and $\partial^+V_g(m)$.
	\end{lemma}
These auxiliary statements are proved in Appendix~\ref{sect:B}. 

Notice that the moment gauge function $g$ implies that the measure $m$ possesses a moment slightly higher than the second moment. Additionally, the sublevel sets of the potential energy function $V_g$ are compact (see~\cite[Lemma 5.1.7]{ambrosio}). Furthermore, this functional is differentiable in the sense that $\mathscr{j}_g$ belongs to both the sub- and superdifferentials whenever $V_g$ is finite.

\begin{proof}[Proof of Theorem~\ref{th1}] We modify  the proof of \cite[Theorem V.1.3]{Bardi}. 
As there, we will argue by contradiction. This means that there exist $\hat{m}\in G$ and $\varkappa>0$ such that 
	\[\phi_1(\hat m)-\phi_2(\hat m)\geq 2\varkappa>0.\] Let $g$ be a function satisfying conditions~\ref{B:cond:G_positive}--\ref{B:cond:G_derivative} for $m=\hat{m}$.
	For each $\varepsilon\in (0,1)$, we define the function $\Phi_\varepsilon:(\operatorname{cl}(G))^2\rightarrow\mathbb{R}$ as follows: 
	\[\Phi_\varepsilon(m_1,m_2)\triangleq \phi_1(m_1)-\phi_2(m_2)-\frac{1}{2\varepsilon}W_2^2(m_1,m_2)-\beta\big[V_g(m_1)\big]^\kappa-\beta\big[V_g(m_2)\big]^\kappa,\] where $V_g$ is defined by~\eqref{intro:V_g},
	$\kappa\triangleq \min\{1,1/(6C_f)\}$, $C_f$ is a constant from Hypothesis~\ref{hyp3}, $\beta$ is such that 
	\begin{equation}\label{ineq:phi_1_2_kappa}\phi_1(\hat{m})-\phi_2(\hat{m})-2\beta\big[V_g(\hat{m})\big]^\kappa\geq \varkappa>0.\end{equation} This implies that, for every $\varepsilon>0$,
	\begin{equation*}\label{ineq:Phi_varepsilon_delta}
		\sup_{(m_1,m_2)\in(\operatorname{cl}G)^2}\Phi_\varepsilon(m_1,m_2)\geq \Phi_\varepsilon(\hat{m},\hat{m})\geq \varkappa.
	\end{equation*}

	Notice that the function $\Phi_\varepsilon$ is u.s.c. Indeed, $\phi_1$ is u.s.c. by Definition~\ref{def:subsolution}, while $\phi_2$ is l.s.c. by Definition~\ref{def:supersolution}. Furthermore, the Wasserstein distance is continuous and the function $V_g$ is l.s.c. according to~\cite[Lemma 5.1.7]{ambrosio}. Moreover, the function $\Phi_\varepsilon$ is bounded above.
	
	Let $\hat{c}$ be a common bound for the values $|\phi_1(m)|$, $|\phi_2(m)|$, $m\in\operatorname{cl}(G)$, and let
	\[C_1\triangleq \big[2\hat{c}\beta^{-1}\big]^{1/\kappa}.\] Notice that, if $V_g(m_1)\vee V_g(m_2)> C_1$, $\varepsilon>0$, then
	\[\Phi_\varepsilon(m_1,m_2)< 0.\] Set \[\mathcal{K}\triangleq \{m\in\operatorname{cl}(G):\, V_g(m)\leq C_1\}.\] Direct calculations show that the set $\mathcal{K}$ is tight and $2$-uniformly integrable. By~\cite[Proposition 7.1.5]{ambrosio} and the lower semicontinuity  of $V_g$, the set $\mathcal{K}$ is compact. Moreover, $(\hat{m},\hat{m})\in \mathcal{K}$. 
	
	 Since $\Phi_\varepsilon$ is u.s.c. and $\mathcal{K}$ is compact, the function $\Phi_\varepsilon$ admits a maximizer $(m_1^\varepsilon,m_2^\varepsilon)$ on $\mathcal{K}^2$. Moreover, on $(\operatorname{cl}(G))^2\setminus (\mathcal{K})^2$, the function $\Phi_\varepsilon$ is negative, while at $\Phi_\varepsilon(\hat{m},\hat{m})\geq \varkappa$. Thus, the maximum of $\Phi_\varepsilon$ on $(\operatorname{cl} G)^2$ is attained at $(m_1^\varepsilon, m_2^\varepsilon)$. Moreover,
	\begin{equation}\label{ineq:Phi_hat_m}
		\Phi_\varepsilon(m_1^\varepsilon,m_2^\varepsilon)\geq \Phi_\varepsilon(\hat{m},\hat{m})\geq \varkappa.
	\end{equation}

	From the latter inequality, it follows that $W_2(m_1^{\varepsilon},m_2^{\varepsilon})\rightarrow 0$ as $\varepsilon\rightarrow 0$. We wish to prove that
	\begin{equation}\label{convergence:dist_eps}\frac{W_2^2(m_1^{\varepsilon},m_2^{\varepsilon})}{2\varepsilon}\rightarrow 0\text{ as }\varepsilon\rightarrow 0.\end{equation} To this end, we let 
	\[A\triangleq \sup_{m\in \mathcal{K}} \big[\phi_1(m)-\phi_2(m)-2\beta [V_g(m)]^\kappa\big].\] We use the fact that the maximum of $\Phi_\varepsilon$ is attained at $(m_1^\varepsilon,m_2^\varepsilon)$ and observe that 
	\begin{equation}\label{ineq:A_A}\begin{split}
	A\leq \Phi_\varepsilon(m_1^{\varepsilon},m_2^{\varepsilon})\leq  \phi_1(m_1^{\varepsilon})-\phi_2(m_1^{\varepsilon})-\beta \big[V_g(m_1^{\varepsilon})\big]^\kappa-\beta\big[V_g(m_2^{\varepsilon})\big]^\kappa.\end{split}\end{equation}
Since 	$\phi_1$ is u.s.c., $\phi_2$ and $V_g$ are l.s.c., while $W_2(m_1^{\varepsilon},m_2^{\varepsilon})\rightarrow 0$ as $\varepsilon\rightarrow 0$, we have that 
\[\limsup_{\varepsilon\rightarrow 0}\Bigg[\phi_1(m_1^{\varepsilon})-\phi_2(m_1^{\varepsilon})-\beta \big[V_g(m_1^{\varepsilon})\big]^\kappa-\beta\big[V_g(m_2^{\varepsilon})\big]^\kappa\Bigg]\leq A.\] From this and~\eqref{ineq:A_A}, we obtain~\eqref{convergence:dist_eps}.

Now we  consider two cases.
\begin{enumerate}[label=(\roman*)]
	\item\label{comparison:case:boundary} There exists a sequence $\{\varepsilon_k\}_{k=1}^\infty$ converging to zero such that $(m_1^{0},m_2^{0})=\lim_{k\rightarrow\infty}(m_1^{\varepsilon_k},m_2^{\varepsilon_k})\in \partial(G\times G)$;
	\item\label{comparison:case:interior} For every sequence $\{\varepsilon_k\}_{k=1}^\infty$ converging to zero such that  $(m_1^{\varepsilon_k},m_2^{\varepsilon_k})\rightarrow (m_1^{0},m_2^{0})$ as $k\rightarrow\infty$ one has that 
	\((m_1^{0},m_2^{0})\in \operatorname{int}(G\times G)\).
\end{enumerate}
In  case \ref{comparison:case:boundary}, we pass to the limit in inequality~\eqref{ineq:Phi_hat_m}. Thanks to~\eqref{convergence:dist_eps}, we deduce that $m_1^{0}=m_2^{0}\in\partial G$, while 
\[\begin{split}
\limsup_{k\rightarrow\infty} \Phi_{\varepsilon_k}(m_1^{\varepsilon_k},m_2^{\varepsilon_k})\leq \phi_1(m_1^{0})-\phi_2(m_1^{0}).\end{split}\] Here we used additionally the upper semicontinuity of the function $\phi_1$ and the lower semicontinuity of the function $\phi_2$. Thanks to the boundary conditions of these functions and~\eqref{ineq:phi_1_2_kappa}, we obtain
\[\varkappa \leq \limsup_{k\rightarrow\infty}\Phi_{\varepsilon_k}(m_1^{\varepsilon_k},m_2^{\varepsilon_k})\leq 0.\] This contradicts the choice of $\varkappa$.

In case~\ref{comparison:case:interior}, we consider only those $\varepsilon$ for which $(m_1^{\varepsilon},m_2^{\varepsilon})\in \operatorname{int}(G\times G)$ and set
\[\begin{split}
\psi_{1,\varepsilon}(m_1)&\triangleq \Phi_\varepsilon(m_1,m_2^{\varepsilon})\\&=
\phi_1(m_1)-\phi_2(m_2^{\varepsilon})-\frac{1}{2\varepsilon}W_2^2(m_1,m_2^{\varepsilon})-\beta \big[V_g(m_1)\big]^\kappa-\beta \big[V_g(m_2^{\varepsilon})\big]^\kappa;
\end{split}\]
\[\begin{split}
	\psi_{2,\varepsilon}(m_2)&\triangleq -\Phi_\varepsilon(m_1^{\varepsilon},m_2)\\&=
	-\phi_1(m_1^{\varepsilon})+\phi_2(m_2)+\frac{1}{2\varepsilon}W_2^2(m_1^{\varepsilon},m_2)+\beta \big[V_g(m_1^{\varepsilon})\big]^\kappa+\beta \big[V_g(m_2)\big]^\kappa.
\end{split}\] Since the pair $(m_1^\varepsilon,m_2^\varepsilon)$ provides the maximum of $\Phi_\varepsilon$ on $(\operatorname{cl}(G))^2$, the function $\psi_{1,\varepsilon}$ attains a maximum at $m_1^{\varepsilon}$. Thus (see Remark~\ref{remark:null}), we have that $\mathbf{0}\in\partial^+\psi_{1,\varepsilon}(m_1^{\varepsilon})$. Let $\vartheta[m_1^{\varepsilon},m_2^{\varepsilon}]$ be an optimal plan between $m_1^{\varepsilon}$ and $m_2^{\varepsilon}$. Recall (see~\eqref{intro:hat_vartheta}) that 
\[\hat{\vartheta}[m_1^\varepsilon,m_2^\varepsilon](x_1)\triangleq \int_{\rd}(x_1-x_2)\vartheta[m_1^\varepsilon,m_2^\varepsilon](dx_2|x_1).\]
  Thanks to~\eqref{ineq:norm_vartheta}, $\hat{\vartheta}[m_1^\varepsilon,m_2^\varepsilon]$ lies in the superdifferential of the mapping $m_1\mapsto \frac{1}{2}W_2(m_1,m_2^{\varepsilon})$ at $m_1^{\varepsilon}$.

 Using this, Remark~\ref{remark:sum:subdiff} and Lemma~\ref{B:lm:derivative_g}, we deduce that 
\[s_1^{\varepsilon}\triangleq \varepsilon^{-1}\hat{\vartheta}[m_1^{\varepsilon},m_2^{\varepsilon}]+\kappa\beta\big[V_g(m_1^{\varepsilon})\big]^{\kappa-1}\cdot j_g\in\partial^+\phi_1(m_1^{\varepsilon}).\] Since $\phi_1$ is a viscosity subsolution of~\eqref{eq:hj}, we have that 
\begin{equation}\label{ineq:H_phi_1}
	H(m_1^{\varepsilon},s_1^{\varepsilon})+1-\phi_1(m_1^{\varepsilon})\geq 0.
\end{equation} Analogously, if 
$\hat{\vartheta}[m_2^\varepsilon,m_1^\varepsilon]$ is defined by the rule 
\[\hat{\vartheta}[m_2^\varepsilon,m_1^\varepsilon](x_2)\triangleq \int_{\rd}(x_2-x_1)\vartheta[m_1^\varepsilon,m_2^\varepsilon](dx_1|x_2),\] then
\[s_2^{\varepsilon}\triangleq -\varepsilon^{-1}\hat{\vartheta}[m_2^{\varepsilon},m_1^{\varepsilon}]-\kappa\beta\big[V_g(m_2^{\varepsilon})\big]^{\kappa-1}\cdot j_g\in\partial^-\phi_2(m_2^{\varepsilon}).\] Here once again used Remark~\ref{remark:sum:subdiff},~\eqref{ineq:subdiff_2_wass} and Lemma~\ref{B:lm:derivative_g}. 

Recall that $\phi_2$ is a viscosity supersolution of~\eqref{eq:hj}. Hence,
\[H(m_2^{\varepsilon},s_2^{\varepsilon})+1-\phi_2(m_2^{\varepsilon})\leq 0.\] This and~\eqref{ineq:H_phi_1} imply the following inequality
\begin{equation}\label{ineq:phi_1_phi_2} \phi_1(m_1^{\varepsilon})-\phi_2(m_2^{\varepsilon})\leq H(m_1^{\varepsilon},s_1^{\varepsilon})-H(m_2^{\varepsilon},s_2^{\varepsilon}).\end{equation}
Furthermore, let $u_\varepsilon$ be such that 
\[
H(m_2^{\varepsilon},s_2^{\varepsilon})=\min_{u}\langle s_2^{\varepsilon},f(\cdot,m_2^{\varepsilon},u)\rangle_{ m_2^{\varepsilon}}
=\langle s_2^{\varepsilon},f(\cdot,m_2^{\varepsilon},u_\varepsilon)\rangle_{m_2^{\varepsilon}}
\] Therefore,
\begin{equation}\label{ineq:Ham_m_1,m_2}
	\begin{split}
H(m_1^{\varepsilon},s_1^{\varepsilon}&)-H(m_2^{\varepsilon},s_2^{\varepsilon})\\\leq 
\varepsilon^{-1}\langle &\hat{\vartheta}[m_1^\varepsilon,m_2^\varepsilon],f(\cdot,m_1^{\varepsilon},u_\varepsilon)\rangle_{m_1^{\varepsilon}}
-\varepsilon^{-1}\langle \hat{\vartheta}[m_2^\varepsilon,m_1^\varepsilon],f(\cdot,m_2^{\varepsilon},u_\varepsilon)\rangle_{m_2^{\varepsilon}}\\+
\kappa&\beta\big[V_g(m_1^{\varepsilon})\big]^{\kappa-1} \langle j_g,f(\cdot,m_1^{\varepsilon},u_\varepsilon)\rangle_{m_1^{\varepsilon}}-\kappa\beta\big[V_g(m_2^{\varepsilon})\big]^{\kappa-1}\langle j_g,f(\cdot,m_2^{\varepsilon},u_\varepsilon)\rangle_{m_2^{\varepsilon}}.
\end{split}
\end{equation}
Now let us  evaluate the right-hand side of this inequality.
First, we use Hypothesis~\ref{hyp2} and obtain
\begin{equation}\label{ineq:Ham_fisrt_term}
\begin{split}
	\varepsilon^{-1}\langle &\hat{\vartheta}[m_1^\varepsilon,m_2^\varepsilon],f(\cdot,m_1^{\varepsilon},u_\varepsilon)\rangle_{m_1^{\varepsilon}}
	-\varepsilon^{-1}\langle \hat{\vartheta}[m_2^\varepsilon,m_1^\varepsilon],f(\cdot,m_2^{\varepsilon},u_\varepsilon)\rangle_{m_2^{\varepsilon}}\\&=
	\varepsilon^{-1}\int_{(\rd)^2}\langle x_1-x_2, f(x_1,m_1,u_\varepsilon)-f(x_2,m_2,u_\varepsilon)\rangle \vartheta[m_1^{\varepsilon},m_2^{\varepsilon}](d(x_1,x_2))\\ &\leq 
	2L\varepsilon^{-1}W_2^2(m_1,m_2).
\end{split}
\end{equation} 
	Furthermore, using the definition of the function $j_g$, condition~\ref{B:cond:G_derivative} and Hypothesis~\ref{hyp3}, we conclude that
	\[\big|\langle j_g,f(\cdot,m_1^{\varepsilon},u_\varepsilon)\rangle_{m_1^{\varepsilon}}\big|
	\leq 3C_f\int_{\rd} \mathscr{a}(x)g(\mathscr{a}(x))\Bigg[\mathscr{a}(x)+\int_{\rd}\mathscr{a}(y)m_1^{\varepsilon}(dy)\Bigg]m_1^{\varepsilon}(dx).
	\] 
	Inequality  \cite[(5)]{gurland1967inequality} and the monotonicity of the function $g$ yield that 
	\begin{equation}\label{ineq:Ham_3_term}
		\big|\langle j_g,f(\cdot,m_1^{\varepsilon},u_\varepsilon)\rangle_{m_1^{\varepsilon}}\big|\leq 6C_f V_g(m_1^{\varepsilon}).
	\end{equation} Similarly, 
		\begin{equation}\label{ineq:Ham_4_term}
		\big|\langle j_g,f(\cdot,m_2^{\varepsilon},u_\varepsilon)\rangle_{m_2^{\varepsilon}}\big|\leq 6C_f V_g(m_2^{\varepsilon}).
	\end{equation}
	 Estimating the right-hand side of~\eqref{ineq:Ham_m_1,m_2} via inequalities~\eqref{ineq:Ham_fisrt_term}--\eqref{ineq:Ham_4_term}, we obtain that
	\[\begin{split}
		H(m_1^{\varepsilon}&,s_1^{\varepsilon})-H(m_2^{\varepsilon},s_2^{\varepsilon})\\ &\leq 
		2L\varepsilon^{-1}W_2^2(m_1,m_2)+6C_f\kappa\beta\big[V_g(m_1^{\varepsilon})]^\kappa +6C_f\kappa\beta\big[V_g(m_2^{\varepsilon})]^\kappa.
	\end{split} 
	\] From this,~\eqref{ineq:phi_1_phi_2} we obtain that 
	\[\Phi_\varepsilon(m_1^{\varepsilon},m_2^{\varepsilon})\leq (2L+2^{-1})\varepsilon^{-1}W_2^2(m_1^{\varepsilon},m_2^{\varepsilon}).\] Here, we additionally used the choice of $\kappa=1\wedge (6C_f)^{-1}$.  Hence,
	\[\Phi(\hat{m},\hat{m})\leq (2L+2^{-1})\varepsilon^{-1}W_2^2(m_1^{\varepsilon},m_2^{\varepsilon}).\] Taking into account the fact that $\varepsilon^{-1}W_2^2(m_1^{\varepsilon},m_2^{\varepsilon})\rightarrow 0$ as $\varepsilon\rightarrow 0$ (see~\eqref{convergence:dist_eps}), we conclude that 
	$\Phi(\hat{m},\hat{m})\leq 0$, which contradicts~\eqref{ineq:phi_1_2_kappa}.
\end{proof}

	\section{Characterization of the value function}\label{sect:characterization}
	We aim to show that the Kruzhkov transform of the value function~\eqref{intro:Kruzhkov} satisfies~\eqref{eq:hj}--\eqref{bound:hj} in the viscosity sense. This result, in particular, ensures the existence of a viscosity solution for the system~\eqref{eq:hj}, \eqref{bound:hj}.
	
	Our proof will use the relaxations of the time-optimal problem in the space of measures. To define them, for every $\varepsilon>0$, we put
	\[M^\varepsilon\triangleq \operatorname{cl}\Big(\big\{m\in\mathcal{P}^2(\rd):\, \operatorname{dist}(m,M)\leq \varepsilon\big\}\Big).\] Here, 
	\[\operatorname{dist}(m,M)\triangleq \inf\big\{W_2(m,\mu):\, \mu\in M\big\}.\]
	Notice that
	\begin{equation}\label{incl:M_varepsilon}
		M^{\varepsilon_1}\subset M^{\varepsilon_2}\text{ whenever }\varepsilon_1\leq\varepsilon_2
	\end{equation} and
	\[M=\bigcap_{\varepsilon>0}M^\varepsilon.\] 
	
	Now we  consider the time-optimal problem for the target set $M^\varepsilon$. As above, given $m(\cdot)\in C([0,+\infty);\mathcal{P}^2(\rd))$, we set 
	\[\tau^\varepsilon(m(\cdot))\triangleq \inf\big\{t\in [0,+\infty):\, m(t)\in M^\varepsilon\big\}.\] By~\eqref{incl:M_varepsilon}, we have that, if $\varepsilon_1\leq\varepsilon_2$,
	\begin{equation}\label{ineq:tau_eps}\tau^{\varepsilon_1}(m(\cdot))\geq \tau^{\varepsilon_2}(m(\cdot)).\end{equation} We define the value function for the time-optimal problem with the target set $M^\varepsilon$ by the rule:
	\[\operatorname{Val}^\varepsilon(\mu)\triangleq \inf\Big\{\tau^\varepsilon(m(\cdot;\mu,\xi)):\,\xi\in\mathcal{U}\Big\}.\] It is convenient to consider the function $\operatorname{Val}^\varepsilon$ on $G$. However, on $G\cap M^\varepsilon$ it is obviously equal to $0$.  As for the original value function we have the following properties.
	\begin{proposition}\label{prop:val_eps_semicontinuous} The function $\operatorname{Val}^\varepsilon$ is lower semicontinuous. Moreover, $\operatorname{Val}^\varepsilon(\mu)=0$ whenever $\mu\in M^\varepsilon$.     
	\end{proposition}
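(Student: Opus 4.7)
The plan is to imitate, essentially verbatim, the proofs of Theorem~\ref{th:optima_existence} and Proposition~\ref{pr1}, replacing the target set $M$ by $M^\varepsilon$. The fact that everything goes through rests on three ingredients that are already available: the compactness of $\mathcal{U}$, the continuity of trajectories established in Proposition~\ref{lm0}, and the closedness of the new target $M^\varepsilon$, which is built into its definition.

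I would begin with the second assertion, which is immediate. If $\mu \in M^\varepsilon$, then for an arbitrary $\xi \in \mathcal{U}$ one has $m(0;\mu,\xi) = \mu \in M^\varepsilon$, so $\tau^\varepsilon(m(\cdot;\mu,\xi)) = 0$, which forces $\operatorname{Val}^\varepsilon(\mu) = 0$.

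For the lower semicontinuity, I would first record the analog of Theorem~\ref{th:optima_existence}: whenever $\mu \in \operatorname{cl}G$ satisfies $\operatorname{Val}^\varepsilon(\mu) < +\infty$, there exists a relaxed control $\xi_* \in \mathcal{U}$ with $\operatorname{Val}^\varepsilon(\mu) = \tau^\varepsilon(m(\cdot;\mu,\xi_*))$. The argument is identical to that for Theorem~\ref{th:optima_existence}: pick a minimizing sequence $\{\xi_n\}$ with times $\tau_n \to \operatorname{Val}^\varepsilon(\mu)$, extract a converging subsequence $\xi_n \to \xi_*$ using compactness of $\mathcal{U}$, pass to the limit in $m(\tau_n;\mu,\xi_n)$ via Proposition~\ref{lm0}, and invoke the closedness of $M^\varepsilon$ to conclude $m(\operatorname{Val}^\varepsilon(\mu);\mu,\xi_*) \in M^\varepsilon$.

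Having this, the lower semicontinuity proof copies that of Proposition~\ref{pr1}. Let $\mu_k \to \mu_0$ in $\operatorname{cl}G$ and set $T_k \triangleq \operatorname{Val}^\varepsilon(\mu_k)$. If $T_k$ is unbounded along some subsequence the inequality $\liminf_k \operatorname{Val}^\varepsilon(\mu_k) \geq \operatorname{Val}^\varepsilon(\mu_0)$ is trivial, so assume (passing to a subsequence) $T_k \to T_0 \in [0,+\infty)$. For each $k$ the existence result above provides an optimal $\xi_k \in \mathcal{U}$ with $T_k = \tau^\varepsilon(m(\cdot;\mu_k,\xi_k))$. By compactness of $\mathcal{U}$ one may assume $\xi_k \to \xi_0 \in \mathcal{U}$, and Proposition~\ref{lm0} then yields $m(T_k;\mu_k,\xi_k) \to m(T_0;\mu_0,\xi_0)$; closedness of $M^\varepsilon$ gives $m(T_0;\mu_0,\xi_0) \in M^\varepsilon$, so $\operatorname{Val}^\varepsilon(\mu_0) \leq \tau^\varepsilon(m(\cdot;\mu_0,\xi_0)) \leq T_0 = \lim_k \operatorname{Val}^\varepsilon(\mu_k)$.

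There is no real obstacle here: the only thing that could have gone wrong is the closedness of the new target, and $M^\varepsilon$ is defined as a closure, so this is automatic. The argument is essentially a bookkeeping exercise confirming that the two properties proved for the original problem depend on the target only through its being a closed subset of $\mathcal{P}^2(\rd)$.
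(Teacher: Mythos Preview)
Your proposal is correct and matches the paper's approach: the paper states this proposition (together with the existence of an optimal control and the dynamic programming principle for the relaxed target) without proof, simply remarking ``As for the original value function we have the following properties,'' so the intended argument is precisely the repetition of the proofs of Theorem~\ref{th:optima_existence} and Proposition~\ref{pr1} with $M$ replaced by the closed set $M^\varepsilon$, exactly as you carry out.
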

	\begin{proposition}\label{prop:optimal_existence_eps} If $\varepsilon>0$, $\mu\in G$ is such that $\operatorname{Val}^\varepsilon(\mu)<+\infty$. Then,  there exists $\xi_{\varepsilon,\mu}$ satisfying
		\[\operatorname{Val}^\varepsilon(\mu)=\tau^\varepsilon(m(\cdot;\mu,\xi_{\varepsilon,\mu})).\]
	\end{proposition}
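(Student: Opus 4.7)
The plan is to reproduce, almost verbatim, the compactness argument used in the proof of Theorem~\ref{th:optima_existence}, since the only property of the target set $M$ used there was its closedness, and $M^\varepsilon$ is closed by construction.

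First, I would invoke the definition of $\operatorname{Val}^\varepsilon(\mu)$ to extract a minimizing sequence $\{\xi_n\}_{n=1}^\infty\subset\mathcal{U}$ so that, writing $\tau_n^\varepsilon\triangleq\tau^\varepsilon(m(\cdot;\mu,\xi_n))$, one has $\tau_n^\varepsilon\to \tau_*^\varepsilon\triangleq\operatorname{Val}^\varepsilon(\mu)<+\infty$. In particular, $m(\tau_n^\varepsilon;\mu,\xi_n)\in M^\varepsilon$ for each $n$.

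Next, since $\mathcal{U}$ is compact (as noted after Definition~\ref{prel:def:convergence_U}), I pass to a subsequence (not relabeled) such that $\xi_n\to\xi_{\varepsilon,\mu}$ in $\mathcal{U}$. Applying Proposition~\ref{lm0} with $f_n\equiv f$, $\mu_n\equiv\mu$, I conclude that on every finite interval $m(\cdot;\mu,\xi_n)\to m(\cdot;\mu,\xi_{\varepsilon,\mu})$ in $C([0,T];\mathcal{P}^2(\mathbb{R}^d))$; choosing $T$ larger than $\sup_n\tau_n^\varepsilon$ (possible since $\tau_n^\varepsilon\to\tau_*^\varepsilon<\infty$), I obtain in particular that $m(\tau_n^\varepsilon;\mu,\xi_n)\to m(\tau_*^\varepsilon;\mu,\xi_{\varepsilon,\mu})$ in $\mathcal{P}^2(\mathbb{R}^d)$. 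Because $M^\varepsilon$ is the closure of a set and hence closed, the limit $m(\tau_*^\varepsilon;\mu,\xi_{\varepsilon,\mu})$ belongs to $M^\varepsilon$, which gives $\tau^\varepsilon(m(\cdot;\mu,\xi_{\varepsilon,\mu}))\leq\tau_*^\varepsilon$; the reverse inequality is immediate from the definition of $\operatorname{Val}^\varepsilon$.

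There is essentially no obstacle here: the argument is a direct transcription of the one given for Theorem~\ref{th:optima_existence}, with $M^\varepsilon$ replacing $M$ and $\tau^\varepsilon$ replacing $\tau$. The only point worth flagging is that one must legitimately evaluate the limiting trajectory at the limit time $\tau_*^\varepsilon$, which requires the uniform convergence on the compact interval $[0,T]$ with $T>\sup_n\tau_n^\varepsilon$ provided by Proposition~\ref{lm0}; this replaces the pointwise identification $m(\tau_n;\mu,\xi_n)\to m(\tau_*;\mu,\xi_*)$ implicitly used in the proof of Theorem~\ref{th:optima_existence}.
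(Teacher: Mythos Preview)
Your argument is correct and is precisely the approach the paper intends: the proposition is stated there without proof, as one of the properties that hold ``as for the original value function,'' i.e., by the same argument as Theorem~\ref{th:optima_existence} with $M$ replaced by the closed set $M^\varepsilon$. Your write-up even makes explicit the choice of a common interval $[0,T]$ with $T>\sup_n\tau_n^\varepsilon$ on which Proposition~\ref{lm0} is applied, a point the original proof leaves implicit.
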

	\begin{proposition}\label{prop:eps_dynprog}
		For every $\mu\in \operatorname{cl} G$ and $h\in [0, \operatorname{Val}^\varepsilon(\mu)]$,
		$$  \operatorname{Val}^\varepsilon(\mu)-h=\inf_{\xi\in \mathcal{U}_{h}} \operatorname{Val}^\varepsilon(m(h;\mu,\xi)).$$
	\end{proposition}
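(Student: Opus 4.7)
The plan is to recycle, almost verbatim, the concatenation argument used for Theorem \ref{th:dynprog}, replacing $\tau$ by $\tau^\varepsilon$ throughout. The statement as printed carries a typographical slip: since $M\subset M^\varepsilon$ yields $\operatorname{Val}\ge\operatorname{Val}^\varepsilon$, the displayed identity with the unperturbed $\operatorname{Val}$ on the right cannot hold whenever the two value functions disagree on some reachable intermediate state. I therefore read the right-hand side as $\operatorname{Val}^\varepsilon(m(h;\mu,\xi))$, i.e.\ the dynamic programming principle for the perturbed target $M^\varepsilon$, and prove that.

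Two ingredients suffice. First, for $\xi\in\mathcal{U}_h$ and $\eta\in\mathcal{U}$ define the concatenation $\xi\diamond_h\eta\in\mathcal{U}$ exactly as in the proof of Theorem \ref{th:dynprog}. The uniqueness of solutions to \eqref{dyn_relaxed} (Proposition \ref{prop:superposition}, applied after a time shift of the characteristic ODE) yields the semigroup identity
$$m(h+t;\mu,\xi\diamond_h\eta)=m(t;m(h;\mu,\xi),\eta)\qquad \text{for all } t\ge 0.$$
Second, the hypothesis $h\le\operatorname{Val}^\varepsilon(\mu)$ forces $\tau^\varepsilon(m(\cdot;\mu,\xi\diamond_h\eta))\ge h$ for every $\xi\in\mathcal{U}_h$ and $\eta\in\mathcal{U}$, because otherwise the restriction $(\xi\diamond_h\eta)|_{[0,h)}=\xi$ would steer $\mu$ into $M^\varepsilon$ before time $\operatorname{Val}^\varepsilon(\mu)$, contradicting the definition of $\operatorname{Val}^\varepsilon$. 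Combined with the semigroup identity this gives
$$\tau^\varepsilon(m(\cdot;\mu,\xi\diamond_h\eta))=h+\tau^\varepsilon(m(\cdot;m(h;\mu,\xi),\eta)).$$

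Taking the infimum over $\eta\in\mathcal{U}$ produces $h+\operatorname{Val}^\varepsilon(m(h;\mu,\xi))$; then taking the infimum over $\xi\in\mathcal{U}_h$ uses that $(\xi,\eta)\mapsto\xi\diamond_h\eta$ is a bijection onto $\mathcal{U}$ — every $\zeta\in\mathcal{U}$ decomposes uniquely into $\zeta|_h\in\mathcal{U}_h$ and its translated tail $t\mapsto\zeta(\cdot|t+h)$, by the disintegration theorem quoted in Section~\ref{sect:prel}. This yields
$$h+\inf_{\xi\in\mathcal{U}_h}\operatorname{Val}^\varepsilon(m(h;\mu,\xi)) = \inf_{\zeta\in\mathcal{U}}\tau^\varepsilon(m(\cdot;\mu,\zeta)) = \operatorname{Val}^\varepsilon(\mu),$$
and rearrangement delivers the identity.

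The only delicate point is the boundary case $h=\operatorname{Val}^\varepsilon(\mu)$, where one needs an optimal $\xi_*$ witnessing $m(h;\mu,\xi_*)\in M^\varepsilon$ so that $\operatorname{Val}^\varepsilon(m(h;\mu,\xi_*))=0$ and the right-hand infimum is attained; Proposition \ref{prop:optimal_existence_eps} supplies exactly this. Beyond that, there is no genuine obstacle — the argument is a mechanical translation of Theorem \ref{th:dynprog} with $\tau$ replaced by $\tau^\varepsilon$.
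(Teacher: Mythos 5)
Your proof is correct and takes essentially the same route the paper intends: Proposition~\ref{prop:eps_dynprog} is stated without a separate argument precisely because it is the analogue of Theorem~\ref{th:dynprog} for the target $M^\varepsilon$, proved by the identical concatenation ($\xi\diamond_h\eta$) and time-shift argument you reproduce. Your reading of the right-hand side as $\operatorname{Val}^\varepsilon(m(h;\mu,\xi))$ is the intended one — the printed $\operatorname{Val}$ is a typographical slip — and your extra remark about the boundary case $h=\operatorname{Val}^\varepsilon(\mu)$ is harmless but unnecessary, since your infimum chain already covers it without invoking Proposition~\ref{prop:optimal_existence_eps}.
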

	\begin{proposition}\label{prop:Val_eps_decrease}
		If $\varepsilon_1\leq \varepsilon_2$, then 
		\[\operatorname{Val}^{\varepsilon_1}(\mu)\geq \operatorname{Val}^{\varepsilon_2}(\mu).\]
	\end{proposition}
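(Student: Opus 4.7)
The statement is a direct monotonicity claim, and the plan is to derive it pointwise from the already--noted inequality~\eqref{ineq:tau_eps} for the hitting-time functional. Concretely, I would fix $\mu\in\operatorname{cl}G$ and pick an arbitrary relaxed control $\xi\in\mathcal{U}$. The trajectory $m(\cdot;\mu,\xi)$ is a single element of $C([0,+\infty);\mathcal{P}^2(\mathbb{R}^d))$ which does not depend on $\varepsilon$, so one can apply inequality~\eqref{ineq:tau_eps} to it and obtain
\[\tau^{\varepsilon_1}(m(\cdot;\mu,\xi))\geq \tau^{\varepsilon_2}(m(\cdot;\mu,\xi)).\]

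Taking the infimum over all $\xi\in\mathcal{U}$ preserves this inequality; the left-hand side becomes $\operatorname{Val}^{\varepsilon_1}(\mu)$ by the very definition of the perturbed value function, and the right-hand side is bounded below by $\operatorname{Val}^{\varepsilon_2}(\mu)=\inf_{\xi\in\mathcal{U}}\tau^{\varepsilon_2}(m(\cdot;\mu,\xi))$. Together these give the required inequality $\operatorname{Val}^{\varepsilon_1}(\mu)\geq \operatorname{Val}^{\varepsilon_2}(\mu)$ for every $\mu\in\operatorname{cl}G$.

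Nothing more delicate is needed: the argument does not invoke compactness of $\mathcal{U}$, lower semicontinuity of $\operatorname{Val}^\varepsilon$, or existence of an optimal control, and it is valid even when one (or both) of the quantities $\operatorname{Val}^{\varepsilon_j}(\mu)$ equals $+\infty$, since the inequality $\tau^{\varepsilon_1}\geq\tau^{\varepsilon_2}$ holds in the extended real line. Thus there is no real obstacle to overcome; the nontrivial content is already contained in the inclusion~\eqref{incl:M_varepsilon} and its immediate consequence~\eqref{ineq:tau_eps}.
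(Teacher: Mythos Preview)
Your argument is correct and rests on the same key ingredient as the paper's proof, namely inequality~\eqref{ineq:tau_eps}. The only difference is a minor technical shortcut: the paper first invokes the existence of an optimal control $\xi_{\varepsilon_1,\mu}$ for the $\varepsilon_1$-problem (Proposition~\ref{prop:optimal_existence_eps}) and then applies~\eqref{ineq:tau_eps} to that single control, which forces a separate treatment of the case $\operatorname{Val}^{\varepsilon_1}(\mu)=+\infty$; you instead apply~\eqref{ineq:tau_eps} to an arbitrary $\xi$ and pass to the infimum, which is slightly more economical since it needs neither the existence result nor the case split.
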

	\begin{proof} If $\operatorname{Val}^{\varepsilon_2}(\mu)=+\infty$, there is nothing to prove. We will consider the case where $\operatorname{Val}^{\varepsilon_2}(\mu)$ is finite.
		Let $\xi_{\varepsilon_1,\mu}\in\mathcal{U}$ be  such that $\operatorname{Val}^{\varepsilon_1}(\mu)=\tau^{\varepsilon_1}(m(\cdot;\mu,\xi_{\varepsilon_1,\mu})).$ By~\eqref{ineq:tau_eps}, we have that 
		\[\tau^{\varepsilon_1}(m(\cdot;\mu,\xi_{\varepsilon_1,\mu}))\geq \tau^{\varepsilon_2}(m(\cdot;\mu,\xi_{\varepsilon_1,\mu})).\] This, together with the definition of the value function  yield the statement of the proposition.
	\end{proof}
	
	Recall that above we introduced the function~$\phi$ that is the Kruzhkov transform  of the value function $\operatorname{Val}$ (see~\eqref{intro:Kruzhkov}).
	
	\begin{theorem}\label{th:character}
		The function $\phi$ is a viscosity solution to the Dirichlet problem~\eqref{eq:hj},~\eqref{bound:hj}.
	\end{theorem}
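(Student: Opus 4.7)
\medskip

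The plan is to verify the two clauses of Definition~\ref{def:solution} separately: first that $\phi$ is itself a viscosity supersolution, and then that a sequence of subsolutions built from the relaxed value functions $\operatorname{Val}^\varepsilon$ converges to $\phi$ pointwise. The boundary condition $\phi|_{\partial G}=0$ is automatic from the definition, since on $\partial G \subset M$ the trivial control yields $\operatorname{Val}=0$ and hence $\phi=0$; lower semicontinuity and boundedness of $\phi$ (with values in $[0,1]$) follow from Proposition~\ref{pr1} and the definition of the Kruzhkov transform.

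For the supersolution inequality, fix $m\in G$ with $\phi(m)<1$ and $s\in\partial^-\phi(m)$. I would invoke Theorem~\ref{th:optima_existence} to obtain an optimal relaxed control $\xi^*\in\mathcal{U}$ with $\operatorname{Val}(m)=\tau(m(\cdot;m,\xi^*))$. Combining optimality with the dynamic programming principle (Theorem~\ref{th:dynprog}) gives the exact identity $\operatorname{Val}(m(h;m,\xi^*))=\operatorname{Val}(m)-h$ for $h$ small, which after the Kruzhkov transform reads
\begin{equation*}
  \phi(m(h;m,\xi^*))-\phi(m)=-h(1-\phi(m))+O(h^2).
\end{equation*}
Writing $m(h;m,\xi^*)=(\operatorname{Id}+hF^*_h)\sharp m$ with
$F^*_h(y)=\tfrac{1}{h}\int_0^h\!\int_U f(X^*(t;y),m(t),u)\xi^*(du|t)\,dt$,
and using the Lipschitz bound on $f$ together with a linear-growth/Gr\"onwall estimate on characteristics, the map $F^*_h$ stays bounded in $L^2(m)$. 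Up to a subsequence, the time-averaged marginals $\nu^*_h := \operatorname{p}^2 \sharp (h^{-1}\xi^*|_{[0,h]})\in \mathcal{P}(U)$ converge narrowly to some $\nu^*$, and one obtains
\begin{equation*}
  \lim_{h\to 0}\langle s,F^*_h\rangle_m
  =\int_U \langle s, f(\cdot,m,u)\rangle_m\,\nu^*(du).
\end{equation*}
Inserting this into the Hadamard characterisation of $\partial^-\phi(m)$ (Remark~\ref{remark:sub_super_diff_eq}) applied along the direction $\bar f(y):=\int_U f(y,m,u)\,\nu^*(du)$ yields $\langle s,\bar f\rangle_m \leq -(1-\phi(m))$. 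Since the infimum is bounded above by any convex average, $H(m,s)=\inf_u\langle s,f(\cdot,m,u)\rangle_m \leq \langle s,\bar f\rangle_m \leq -(1-\phi(m))$, which is the desired supersolution inequality.

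For the subsolution sequence, the natural candidate is $\phi_k(m):=1-e^{-\operatorname{Val}^{1/k}(m)}$ (or, if upper semicontinuity fails, its upper semicontinuous envelope). The boundary condition $\phi_k=0$ on $\partial G$ and the pointwise convergence $\phi_k(m)\to\phi(m)$ follow from Propositions~\ref{prop:val_eps_semicontinuous}, \ref{prop:Val_eps_decrease} together with the closedness of $M=\bigcap_\varepsilon M^\varepsilon$ (monotone limit). The subsolution HJ inequality is the \emph{easy} direction of the DPP: for a \emph{constant} control $\xi_u$, Proposition~\ref{prop:eps_dynprog} gives $\operatorname{Val}^{1/k}(m)\leq h+\operatorname{Val}^{1/k}(m(h;m,\xi_u))$, hence $\phi_k(m(h;m,\xi_u))-\phi_k(m)\geq -h(1-\phi_k(m))+O(h^2)$, and pairing this against $s\in\partial^+\phi_k(m)$ via the Hadamard superdifferential inequality (with direction $F=f(\cdot,m,u)$ and perturbations $F'_h\to F$ at rate $O(h)$, as supplied by the characteristic flow) gives $\langle s,f(\cdot,m,u)\rangle_m\geq -(1-\phi_k(m))$ for every $u\in U$, i.e.\ $H(m,s)+1-\phi_k(m)\geq 0$.

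The main obstacles I anticipate are, first, extracting a direction $\bar f\in L^2(m)$ from the optimal control with sufficient regularity to plug into the Hadamard subdifferential (the $\|F'_h - \bar f\|_{L^2(m)}=O(h)$ rate required by Remark~\ref{remark:sub_super_diff_eq} is not automatic at a non-Lebesgue point of $\xi^*$; one has to exploit the fact that only the inner product $\langle s,F^*_h\rangle_m$ needs to converge, not $F^*_h$ itself, which is where the narrow compactness of $\mathcal{U}$ together with the joint continuity of $f$ and compactness of $U$ are essential); and second, arranging the subsolution sequence to be \emph{upper semicontinuous and continuous on $\partial G$}. If $\phi^{1/k}$ itself is not usc, one must either replace it by its usc envelope (and verify that this envelope still inherits the superdifferential inequality via a pertubed DPP argument) or, alternatively, regularise by an inf-convolution in the Wasserstein metric, with the continuity on $\partial G$ following from a barrier construction using the enlargement $M^\varepsilon$.
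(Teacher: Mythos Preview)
Your proposal follows the same architecture as the paper's proof: a supersolution step via the DPP and characteristics, a subsolution sequence built from the Kruzhkov transforms of the relaxed values $\operatorname{Val}^\varepsilon$ (passed to their upper semicontinuous envelopes), and a convergence step. Two minor differences and one genuine gap are worth flagging.

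For the supersolution, the paper does not fix a single optimal $\xi^*$ as you do; instead it selects, for each $h>0$, a control $\xi_h\in\mathcal{U}_h$ attaining the DPP infimum on $[0,h]$. This avoids your restriction to $\phi(m)<1$: when $\operatorname{Val}(m)=+\infty$ no optimal $\xi^*$ exists, but a minimiser on each finite horizon still does, and the identity $\phi((\operatorname{Id}+hF_h)\sharp m)-\phi(m)=(\phi(m)-1)(e^h-1)$ still yields $d^-_H\phi(m;F)\le\phi(m)-1$. Your concern about extracting a limiting direction is resolved in the paper by an appendix proposition showing that the averaged velocities $F_h$ converge in $L^2(\mu)$ to $F(\cdot)=\int_U f(\cdot,\mu,u)\zeta(du)$ for some $\zeta\in\mathcal{P}(U)$ along a subsequence; the paper plugs this $L^2$ convergence directly into the Hadamard derivative, rather than your suggested route via convergence of the pairing $\langle s,F_h\rangle_m$ alone.

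The point where your outline is genuinely incomplete is the convergence $\phi_k\to\phi$. Your monotone-limit argument correctly gives $\psi^\varepsilon:=1-e^{-\operatorname{Val}^\varepsilon}\uparrow\phi$. But $\psi^\varepsilon$ is only lower semicontinuous, so the subsolutions must be the upper envelopes $\phi^\varepsilon\ge\psi^\varepsilon$, and it is $\phi^\varepsilon\to\phi$ that you need. Nothing in your argument prevents the envelope from overshooting. The paper closes this by invoking the comparison principle (Theorem~\ref{th1}) already proved: since each $\phi^\varepsilon$ is a subsolution and $\phi$ a supersolution, $\phi^\varepsilon\le\phi$; sandwiching with $\phi^\varepsilon\ge\psi^\varepsilon\to\phi$ forces $\phi^\varepsilon\to\phi$. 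This use of the comparison principle inside the existence proof is the ingredient your sketch is missing.
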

	\begin{proof} We split the proof into four steps.
		
		\textit{Step 1.} Here, we prove that the function $\phi$ is a supersolution to~\eqref{eq:hj}, \eqref{bound:hj}. First, notice that $\phi$ is bounded and lower semicontinuous, while $\phi(\mu)=0$ when $\mu\in\partial G$. 
		
		Furthermore, we choose a probability $\mu\in G$ and a function $s\in \partial^-\phi(\mu)$. By Remark~\ref{remark:sub_super_diff_eq}, we have that, for every $F\in \SL{\mu}$,
		\begin{equation}\label{ineq:sub_F}
			\left\langle s,F \right\rangle_\mu \leq d^-_H\phi(\mu;F).
		\end{equation}  The dynamic programming principle (see Theorem~\ref{th:dynprog}) can be rewritten as follows: for each $h\in [0,-\ln(1-\phi(x)]$,
		\[(1-\phi(\mu))e^h=\sup_{\xi\in\mathcal{U}_h}(1-\phi(m(h;\mu,\xi))).\]
		Due to Theorem~\ref{th:optima_existence},  there exists an optimal control $\xi_h\in\mathcal{U}_h$ such that 
		\[(1-\phi(\mu))e^h=(1-\phi(m(h;\mu,\xi_h))).\] Let 
		\[F_h(y)\triangleq \frac{1}{h}\int_0^h\int_U f(X(t;y,\mu,\xi_h),m(t;\mu,\xi_h),u)\xi_h(du|t)dt,\] where $X(\cdot;y,\mu,\xi_h)$ is a trajectory such that
		\[\frac{d}{dt}X(t;y,\mu,\xi_h)=\int_U f(X(t;y,\mu,\xi_h),m(t;\mu,\xi_h),u)\xi_h(du|t),\ \ X(0;y,\mu,\xi_h)=y.\] Due to Proposition~\ref{A:prop:continuity} and  Hypothesis~\ref{hyp2}, we have that $F_h\in\SL{\mu}$. Notice that  $m(h;\mu,\xi_h)=(\operatorname{Id}+hF_h)\sharp\mu$. Thus, we have that
		\begin{equation}\label{equality:phi_F}\phi((\operatorname{Id}+hF_h)\sharp\mu)-\phi(\mu)=(\phi(\mu)-1)\cdot (e^h-1).\end{equation}
		Furthermore, Proposition~\ref{A:prop:F_limits} gives the existence of a sequence $\{h_k\}_{k=1}^\infty$ and a probability $\zeta\in\mathcal{P}(U)$ such that $h_k\rightarrow 0$, $F_{h_k}\rightarrow F$ as $k\rightarrow\infty$ for $F\in\SL{\mu}$ defined by the rule: $F(y)\triangleq \int_Uf(y,\mu,u)\zeta(du)$.  Therefore, from~\eqref{equality:phi_F}, we conclude that 
		\[d^-_H\phi(\mu;F)\leq \phi(\mu)-1.\]
		This and~\eqref{ineq:sub_F} imply that 
		\[\int_U\int_{\mathbb{R}^d} \left\langle s(x),f(x,\mu,u) \right\rangle \mu(dx)\zeta(du)\leq  \phi(\mu)-1. \]
		Using the definition of the Hamiltonian, we obtain the following inequality, for every $\mu\in G$ and $s\in\partial^-\phi(\mu)$,
		\[H(\mu,s)+1-\phi(\mu)\leq 0.\]
		Thus, $\phi$ is a supersolution of problem \eqref{eq:hj}, \eqref{bound:hj}.
		
		
		\textit{Step 2}. Let $\psi^\varepsilon$ be the Kruzhkov transform of the function $\operatorname{Val}^\varepsilon$, i.e., 
		\[\psi^\varepsilon(\mu) \triangleq 1-e^{-\operatorname{Val}^\varepsilon(\mu)}.\] Moreover, we define $\phi^\varepsilon$ to be an upper envelope of the function $\psi^\varepsilon$. This means that, for every $\mu\in G$, 
		\[\phi^\varepsilon(\mu)\triangleq \limsup_{\mu'\rightarrow \mu,\ \ \mu'\in \operatorname{cl}G}\psi^\varepsilon(\mu')=\lim_{\delta\downarrow 0}\sup\big\{\psi^\varepsilon(\mu'):\, W_2(\mu,\mu')\leq\delta,\ \ \mu'\in G\setminus M\big\}.\] Notice that, if $\mu\in G\cap M^\varepsilon$, then $\psi^\varepsilon(\mu)=0$. In particular,  $\phi^\varepsilon(\mu)=0$ whenever $\mu\in\partial G$.
		
		\textit{Step 3}. Here we prove that  each function $\phi^\varepsilon$ is a subsolution of~\eqref{eq:hj}, \eqref{bound:hj}. 
		
		The fact that  $\phi^\varepsilon$ satisfies the boundary condition was proved above. Moreover, the definition of $\phi^\varepsilon$ implies that it is upper semicontinuous and takes values in $[0, 1]$.
		
		Furthermore, let us show that the function $\phi^\varepsilon$ satisfies the following condition: for every $\mu\in G$ and $s\in \partial^+\phi^\varepsilon(\mu)$,
		\begin{equation}\label{ineq:H_phi_eps}
			H(\mu,s)+1-\phi^\varepsilon(\mu)\geq 0
		\end{equation} First, we consider the case when $\operatorname{Val}^\varepsilon(\mu)>0$. 
		
		By the construction of $\phi^\varepsilon$, there exists a sequence $\{\mu_k\}_{k=1}^\infty\subset G\setminus M$ converging to~$\mu$, such that
		\[\lim_{k\rightarrow\infty}\psi^\varepsilon(\mu_k)=\phi^\varepsilon(\mu).\] Without loss of generality, we assume that $\psi^\varepsilon(\mu_k)\geq \hat{h}\triangleq \phi^\varepsilon(\mu)/2$. Since $\operatorname{Val}^\varepsilon(\mu)>0$, we  have that $\hat{h}>0$.  By the definition of the superdifferential, for every $F\in \SL{\mu}$, we have
		\begin{equation*}
			\left\langle s,F \right\rangle_\mu \geq d^+_H\phi^\varepsilon(\mu;F).
		\end{equation*}  We choose $f^0(x)\triangleq f(x,\mu,u_0)$, where $u_0$ is such that \begin{equation}\label{intro:u_0} 
			H(\mu,s)=\left\langle s,f(\cdot,\mu,u_0)\right\rangle_\mu.\end{equation}
		Hence,
		\begin{equation}\label{ineq:H_derivative_geq}H(\mu,s)=  \left\langle s,f^0 \right\rangle_\mu\geq d^+_H\phi^\varepsilon(\mu;f^0). \end{equation} 
		Furthermore, we evaluate $d^+_H\phi^\varepsilon(\mu;f^0)$ in the following way.
		
		Since $\psi^\varepsilon$ is the Kruzhkov transform of the  function $\operatorname{Val}^\varepsilon$,  the dynamic programming principle for problem with the target $M^\varepsilon$ (see Proposition~\ref{prop:eps_dynprog}) implies that, for $h\in [0,-\ln(1-\psi^\varepsilon(\mu_k))]$,
		\begin{equation} \label{dynprog}
			(1-\psi^\varepsilon (\mu_k))e^h=\sup_{\xi\in \mathcal{U}_{h}}(1- \psi^\varepsilon(m(h;\mu_k,\xi))).    
		\end{equation}
		
		We consider  $h\in (0,\hat{h})$ and  the constant relaxed control on $\mathcal{U}_h$ defined by the rule $\xi^0(\cdot|t)\triangleq \delta_{u^0}$.     Inequality~\eqref{dynprog} gives that 
		\[(1-\psi^\varepsilon (\mu_k))e^h\geq 1- \psi^\varepsilon(m(h;\mu_k,\xi^0)).\] Using the definition of the function $\phi^\varepsilon$, the choice of the sequence $\{\mu_k\}_{k=1}^\infty$ and Proposition~\ref{lm0}, we conclude that 
		\[(1-\phi^\varepsilon (\mu))e^h\geq 1- \phi^\varepsilon(m(h;\mu,\xi^0)).\]
		Now, as above, let $X(\cdot;y,\mu,\xi^0)$ satisfy
		\[\frac{d}{dt}X(t;y,\mu,\xi^0)=f(X(t;y,\mu,\xi^0),m(t;\mu,\xi^0),u^0),\ \ X(0;y,\mu,\xi^0)=y.\] Set
		\[F_h(y)\triangleq \frac{1}{h}\int_0^h f(X(t;y,\mu,\xi^0),m(t),u^0)dt.\] Proposition~\ref{A:prop:F_limits} yields that 
		$F_h\rightarrow f^0$  as $h\rightarrow 0$ in $\SL{\mu}$. Additionally, notice that $X(h;y,\mu,\xi^0)=y+F_h(y)\cdot h.$ Thus,
		\[m(h;\mu,\xi^0)=(\operatorname{Id}+hF_h)\sharp \mu.\] 
		Therefore,
		\[\phi^\varepsilon((\operatorname{Id}+h F_h)\sharp \mu)-\phi^\varepsilon(\mu)\geq (\phi^\varepsilon(\mu)-1)\cdot (e^h-1).\] Dividing both parts on $h$ and using the definition of the Hadamard upper derivative, we conclude that 
		\[d^+_H\phi^\varepsilon(\mu,f^0)\geq \phi^\varepsilon(\mu)-1.\] This and~\eqref{ineq:H_derivative_geq} imply~\eqref{ineq:H_phi_eps} for each $\mu\in G$ satisfying $\phi^\varepsilon(\mu)>0$ and every $s\in \partial^+\phi^\varepsilon(\mu)$.
		
		Now, let $\phi^\varepsilon(\mu)=0$. As above, we choose $s\in\partial^+\phi^\varepsilon(\mu)$. Since $\phi^\varepsilon$ is nonnegative on $\operatorname{cl}(G)$, we have that $d^+_H\phi^\varepsilon(\mu;f^0)\geq 0$. Here, $f^0(\cdot)=f(\cdot,\mu,u^0)$ with $u^0$ satisfying~\eqref{intro:u_0}. Thus (see~\eqref{ineq:H_derivative_geq}), 
		\[H(\mu,s)\geq d^+_H\phi^\varepsilon(\mu;f^0)\geq 0.\] This implies~\eqref{ineq:H_phi_eps} for each $\mu$ satisfying $\phi^\varepsilon(\mu)=0$ and every $s\in \partial^+\phi^\varepsilon(\mu)$.
		
		
		
		\textit{Step 4}. We claim that, for each $\mu\in G$,
		\begin{equation}\label{equality:lim_eps}\lim_{\varepsilon\downarrow 0}\phi^\varepsilon(\mu)=\phi(\mu). \end{equation} This equality directly follows from the inequality 
		\begin{equation}\label{ineq:limit_value}
			\lim_{\varepsilon\downarrow 0}\operatorname{Val}^\varepsilon(\mu)\geq \operatorname{Val}(\mu),
		\end{equation}  the monotonicity of the Kruzhkov transform, Proposition~\ref{prop:Val_eps_decrease} and the comparison principle (see Theorem~\ref{th1}). Equality~\eqref{equality:lim_eps} and Steps 1--3 give that the function $\phi$ is a viscosity solution of~\eqref{eq:hj}, \eqref{bound:hj}.
		
		So, it remains to prove~\eqref{ineq:limit_value}.  First, recall that Proposition~\ref{prop:Val_eps_decrease} means that $\varepsilon\mapsto \operatorname{Val}^\varepsilon(\mu)$ decreases. Therefore, there exists a limit $\lim_{\varepsilon\downarrow 0}\operatorname{Val}^\varepsilon(\mu)$.
		
		Now, we consider the case where 
		\begin{equation}\label{converge:Val_eps}
			T\triangleq\lim_{\varepsilon\downarrow 0}\operatorname{Val}^\varepsilon(\mu)<\infty.
		\end{equation}     
		
		By Proposition \ref{prop:optimal_existence_eps}, for each $\varepsilon>0$, there exists a relaxed control $\xi_{\varepsilon,\mu}$ such that
		\[\operatorname{Val}^\varepsilon(\mu)=\tau^\varepsilon(m(\cdot;\mu,\xi_{\varepsilon,\mu}))\] and  $m(\operatorname{Val}^\varepsilon(\mu);\mu,\xi_{\varepsilon,\mu})\in M^\varepsilon$. Furthermore, the fact that $\mathcal{U}$ is compact  implies that there exist a sequence $\{\varepsilon_k\}_{k=1}^\infty$ converging to zero and a relaxed control $\xi$ such that $\xi_{\varepsilon_k,\mu}$ tend to $\xi$. Thus, by Proposition~\ref{lm0}, we have that 
		$\{m(\cdot;\mu,\xi_{\varepsilon_k,\mu})\}_{k=1}^\infty$ converges to $m(\cdot;\mu,\xi)$. Using the definition of the sets $M^\varepsilon$, the functions $\operatorname{Val}^\varepsilon$ and the convergence~\eqref{converge:Val_eps}, we conclude that 
		$m(T;\mu,\xi)\in M.$ Hence, \[\operatorname{Val}(\mu)\leq T=\lim_{\varepsilon\downarrow 0}\operatorname{Val}^\varepsilon(\mu).\]
		
		The case where 
		\[\lim_{\varepsilon\downarrow 0}\operatorname{Val}^\varepsilon(\mu)=\infty\] is obvious.
	\end{proof}

	\section{Topological properties of the value function}\label{sect:topological}
	
First, we provide a sufficient condition for the continuity of the value function.
	\begin{theorem} \label{th3}
		Assume that the viscosity solution of  Dirichlet problem~\eqref{eq:hj}, \eqref{bound:hj} is continuous on $\partial G$. Then, it is continuous at every point of $\operatorname{cl}G$.
	\end{theorem}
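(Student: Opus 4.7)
The plan is to prove upper semicontinuity of $\phi$ on $\operatorname{cl}G$; together with Proposition~\ref{pr1} this yields continuity. Since $\phi=1-e^{-\operatorname{Val}}$ is monotone in $\operatorname{Val}$, it is enough to show $\limsup_{n}\operatorname{Val}(\mu_n)\leq \operatorname{Val}(\mu)$ for every sequence $\mu_n\to\mu$ in $\operatorname{cl}G$. Two cases are immediate: if $\mu\in\partial G$ this is exactly the hypothesis, and if $\mu\in G$ with $\operatorname{Val}(\mu)=+\infty$ the bound is trivial since $\phi\leq 1=\phi(\mu)$.

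The substantive case is $\mu\in G$ with $T\triangleq \operatorname{Val}(\mu)<+\infty$. I first apply Theorem~\ref{th:optima_existence} to select an optimal relaxed control $\xi_*\in\mathcal{U}$, so that $m(T;\mu,\xi_*)\in M$. The key geometric observation is that $m(T;\mu,\xi_*)$ must lie on $\partial G=\partial M$: otherwise it would be an interior point of $M$, and continuity of $t\mapsto m(t;\mu,\xi_*)$ would force $m(t;\mu,\xi_*)\in M$ for some $t<T$, contradicting optimality.

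Given a sequence $\mu_n\to\mu$ in $\operatorname{cl}G$, Proposition~\ref{lm0} yields $m(T;\mu_n,\xi_*)\to m(T;\mu,\xi_*)\in \partial G$ in $\mathcal{P}^2(\mathbb{R}^d)$. Applying the dynamic programming principle (Theorem~\ref{th:dynprog}) to $\mu_n$ with horizon $T$ whenever $T\leq \operatorname{Val}(\mu_n)$, and using the trivial bound when $T>\operatorname{Val}(\mu_n)$, one obtains in both situations
\[
\operatorname{Val}(\mu_n)\leq T+\operatorname{Val}(m(T;\mu_n,\xi_*)).
\]
If $m(T;\mu_n,\xi_*)\in M$ the last term vanishes; if $m(T;\mu_n,\xi_*)\in G$, then it approaches a point of $\partial G$ from inside $\operatorname{cl}G$, and the hypothesis on continuity at $\partial G$ forces $\operatorname{Val}(m(T;\mu_n,\xi_*))\to 0$. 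Passing to the $\limsup$ gives $\limsup_{n}\operatorname{Val}(\mu_n)\leq T=\operatorname{Val}(\mu)$, as required.

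The main obstacle is the geometric step that an optimal trajectory must terminate on $\partial G$ and not in the interior of $M$; this uses the closedness of $M$ together with the continuity of $t\mapsto m(t;\mu,\xi_*)$. Without it the boundary continuity assumption could not be invoked. Everything else reduces to the stability of the continuity equation under perturbation of the initial datum (Proposition~\ref{lm0}) and the one-sided form of the dynamic programming principle built into Theorem~\ref{th:dynprog}.
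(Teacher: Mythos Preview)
Your proof is correct and takes a genuinely different route from the paper. The paper argues via the viscosity machinery: it forms the upper semicontinuous envelope $\phi^*(\mu)\triangleq\limsup_{\mu'\to\mu}\phi(\mu')$, shows (by repeating Step~3 of the proof of Theorem~\ref{th:character}) that $\phi^*$ is a viscosity subsolution of~\eqref{hj}, uses the boundary hypothesis to get $\phi^*=0$ on $\partial G$, and then invokes the comparison principle (Theorem~\ref{th1}) to conclude $\phi^*\leq\phi$, whence $\phi^*=\phi$. Your argument is purely control-theoretic: you bypass the comparison principle entirely and instead combine the existence of an optimal control, the geometric observation that the optimal trajectory must hit $M$ on $\partial M=\partial G$, the stability of trajectories under perturbation of the initial datum (Proposition~\ref{lm0}), and the one-sided dynamic programming inequality. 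Your approach is more elementary in that it does not rely on Theorem~\ref{th1}, and it makes the role of the boundary hypothesis completely transparent; the paper's approach fits more naturally into the viscosity framework and would generalize more readily to Hamiltonians not arising from a control problem with an attainable optimum.
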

	\begin{remark}
		Theorem~\ref{th3} means that, if the value function is continuous on $\partial G$, then it is continuous everywhere on $\operatorname{cl}G$. Moreover,  the continuity of the value function on $\partial G$ directly follows from the small time local attainability property introduced in \cite{Krastanov_Quincampoix_2001}. 
		For the examined case, it takes the following form: given $\varepsilon>0$, one can find $\delta>0$ such that $\operatorname{Val}(\mu)\leq \varepsilon$ whenever $\mu\in G$ satisfies $\operatorname{dist}(\mu,M)<\delta$. Obviously, under the small time local attainability assumption, the value function is continuous at every point of $\partial G$. If, additionally, $M$ is compact,  the small time local attainability property is equivalent to the continuity of the value function on~$\partial G$. For the space of probability measures, this property was considered in  \cite{Cavagnari2017,Cavagnari2021}. In these studies, the regularity of the value function for the time-optimal problem in the space of probability measures was investigated under the assumption that the problem satisfies the small-time local attainability property, with the target being a hyperplane in the space of measures.
	\end{remark}
	\begin{proof}[Proof of Theorem~\ref{th3}]
		Let $\phi$ denote the aforementioned viscosity solution. For every $\mu\in\operatorname{cl}G$, we put
		\[\phi^*(\mu)\triangleq \limsup_{\mu'\rightarrow \mu}\phi(\mu').\] Notice that the function $\phi^*:\operatorname{cl}G\rightarrow \mathbb{R}$ is upper semicontinuous. 
		By following the same reasoning as in Step 3 of the proof of Theorem~\ref{th:character}, one can show that
		\[H(\mu,s)+1-\phi^*(\mu)\geq 0\] for every $\mu\in G$ and $s\in\partial^+\phi^*(\mu)$. Moreover, the continuity of the function $\phi$ on $\partial G$ gives that $\phi^*(\mu)=0$ for every $\mu\in\partial G$. Thus,
		$\phi^*$ is a subsolution of~\eqref{eq:hj}, \eqref{bound:hj}. The comparison principle (see Theorem~\ref{th1}) gives that 
		$\phi^*(\mu)\leq \phi(\mu)$ on $\operatorname{cl} G$. Since the opposite inequality directly follows from the definition of the function $\phi^*$, we conclude that $\phi=\phi^*$ and the function $\phi$ is continuous.
	\end{proof}

	Now, let us consider the perturbation of the dynamics of the time-optimal problem. Since the value function is generally only lower semicontinuous, we require a suitable concept of convergence. In our opinion, the most natural convergence here is the $\Gamma$-convergence defined as follows (see \cite[Definition 1.5]{Braides}).
	\begin{definition}\label{gc}
		For each natural $n$, let $F_n$ be a  functional  defined on a topological space $X$ with values in $\mathbb{R}\cup \{+\infty\}$. The sequence $\{F_n\}_{n=1}^\infty$ is said to $\Gamma$-converge  to  a functional $F:X\rightarrow \mathbb{R}\cup \{+\infty\}$ if the following two conditions hold:
		\begin{enumerate}
			\item for every sequence $\{x_n\}_{n=1}^\infty \in X$ satisfying $\lim\limits_{n\rightarrow \infty}x_n=x$, \[F(x)\leq \liminf\limits_{n\rightarrow \infty} F_n(x_n);\]
			\item for each $x\in X$, there is a sequence  $\{x_n\}_{n=1}^\infty \in X$ such that \[\lim\limits_{n\rightarrow \infty}x_n=x \text{ and }F(x)\geq \limsup\limits_{n\rightarrow \infty} F_n(x_n).\]
		\end{enumerate}
	\end{definition} If $\{F_n\}_{n=1}^\infty$ $\Gamma$-converges to $F$, then we write
	\[F=\Gamma\mbox{-}\lim\limits_{n\rightarrow\infty} F_n.\]
	
	Now we  consider a perturbation of the dynamics of~\eqref{dyn}. For each natural $n$, let a function $f_n:\rd\times\mathcal{P}^2(\rd)\times U\rightarrow \rd$, $n\in\mathbb{N}$ satisfy Hypothesis~\ref{hyp1}--\ref{hyp3}.  If  $\mu\in\mathcal{P}^2(\rd)$, $\xi\in\mathcal{U}$, then there exists a unique distributional solution to
	\[\partial_t m(t)+\operatorname{div}\Bigg(\int_U f_n(x,m(t),u)m(t)\xi(du|t)\Bigg)=0,\ \ m(0)=\mu. \] We denote this solution by $m_n(\cdot;\mu,\xi)$. The value function of the perturbed time-optimal is defined by the rule:
	\[\operatorname{Val}_n(\mu)\triangleq \inf\big\{\tau(m_n(\cdot;\mu,\xi):\, \xi\in\mathcal{U}\big\}.\]

	\begin{theorem}
		Assume the following convergence of the dynamics $f_n$ to $f$: for each $c>0$, 
		\begin{equation*}
			\begin{split}
				\sup\Big\{\|f_n(x,&m,u)-f(x,m,u)\|:\\ &x\in\rd,\, m\in\mathcal{P}^2(\rd),\, \varsigma(m)\leq c,\, u\in U\Big\}\rightarrow 0\text{ as }n\rightarrow\infty.
			\end{split}
		\end{equation*} Then,  
		$$\Gamma\mbox{-}\lim\limits_{n\rightarrow\infty} \operatorname{Val}_n=\operatorname{Val} .$$
	\end{theorem}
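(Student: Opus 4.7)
The plan is to verify the two conditions of Definition~\ref{gc} separately, adapting the familiar strategy for $\Gamma$-convergence of minimum-time functions to the present Wasserstein setting.

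For the liminf inequality (condition~(1) of Definition~\ref{gc}), I would mimic the compactness argument used in Proposition~\ref{pr1} and Theorem~\ref{th:optima_existence}. Given $\mu_n\to\mu$, one may assume $\liminf_{n\to\infty}\operatorname{Val}_n(\mu_n)=T_*<+\infty$ and extract a subsequence realising the limit. The analogue of Theorem~\ref{th:optima_existence} for the perturbed dynamics (whose proof carries over verbatim, using only compactness of $\mathcal{U}$, closedness of $M$, and the stability supplied by Proposition~\ref{lm0} for varying drifts) furnishes optimal relaxed controls $\xi_n$ with $m_n(\operatorname{Val}_n(\mu_n);\mu_n,\xi_n)\in M$. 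Passing once more to a subsequence $\xi_n\to\xi^*\in\mathcal{U}$ by compactness of $\mathcal{U}$, and applying Proposition~\ref{lm0} to the triple $(f_n,\mu_n,\xi_n)\to (f,\mu,\xi^*)$, one obtains convergence of the trajectories in $C([0,T_*+1];\mathcal{P}^2(\mathbb{R}^d))$; closedness of $M$ then gives $m(T_*;\mu,\xi^*)\in M$, hence $\operatorname{Val}(\mu)\leq T_*$.

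For the recovery sequence (condition~(2)), the cases $\operatorname{Val}(\mu)=+\infty$ and $\mu\in M$ are handled by $\mu_n\equiv\mu$. In the substantive case $\mu\in G$ with $T\triangleq\operatorname{Val}(\mu)\in(0,+\infty)$, the plan is to construct $\mu_n$ by backward evolution under the perturbed dynamics. Fix an optimal $\xi^*$ for $(\mu,f)$ given by Theorem~\ref{th:optima_existence} and set $\nu\triangleq m(T;\mu,\xi^*)\in M$. For each $n$, time-reverse~\eqref{dyn_relaxed}: define $m'_n$ on $[0,T]$ as the unique solution, in the sense of Definition~\ref{def1}, of the nonlocal continuity equation with drift $\tilde f_n(x,m,u)\triangleq -f_n(x,m,u)$, relaxed control $\tilde\xi^*(\cdot|s)\triangleq \xi^*(\cdot|T-s)$, and initial datum $\nu$. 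Put $\mu_n\triangleq m'_n(T)$. A direct time-reversal computation shows that $t\mapsto m'_n(T-t)$ solves the forward equation~\eqref{dyn_relaxed} with dynamics $f_n$, control $\xi^*$, and initial datum $\mu_n$, so uniqueness gives $m_n(T;\mu_n,\xi^*)=m'_n(0)=\nu\in M$, whence $\operatorname{Val}_n(\mu_n)\leq T$. Finally, applying Proposition~\ref{lm0} to the reversed system (with fixed initial datum $\nu$, fixed reversed control $\tilde\xi^*$, and $\tilde f_n\to -f$ uniformly on bounded sets, inherited from the hypothesis on $f_n$) yields $m'_n(T)\to m'(T)=\mu$, that is $\mu_n\to\mu$.

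The main obstacle is the soundness of the backward construction. One must verify that the time-reversed drift $\tilde f_n$ still fits the framework of Section~\ref{sect:statement} (it remains continuous and Lipschitz in $(x,m)$, so the existence--uniqueness--stability theory applies and produces a well-defined $m'_n$), and that the uniform convergence hypothesis on $f_n$ combined with compactness of $U$ delivers the uniform-on-bounded-sets convergence of $\tilde f_n\to -f$ needed to invoke Proposition~\ref{lm0} on the reversed system and so to control the endpoints $\mu_n=m'_n(T)$. Once these points are in place, the two inequalities combine to give $\Gamma\mbox{-}\lim_{n\to\infty}\operatorname{Val}_n=\operatorname{Val}$.
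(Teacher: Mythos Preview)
Your proposal is correct and follows essentially the same approach as the paper: the liminf inequality via optimal controls for the perturbed problems, compactness of $\mathcal{U}$, Proposition~\ref{lm0}, and closedness of $M$; the recovery sequence via backward evolution under $f_n$ from the optimal terminal point $\nu=m(T;\mu,\xi^*)\in M$, with Proposition~\ref{lm0} applied in reverse time to get $\mu_n\to\mu$. Your explicit formulation of the reversed drift and control is slightly more detailed than the paper's brief appeal to ``time reversing,'' but the argument is the same.
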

	\begin{proof} 
		
		Let us prove  the  first property in Definition~\ref{gc}. Let $\{\mu_n\}_{n=1}^\infty\in \mathcal{P}^2(\mathbb{R}^d)$ be a sequence converging to $\mu$. If $\operatorname{Val}_n(\mu_n)\rightarrow +\infty$ as $n\rightarrow\infty$, then, obviously,
		\[\operatorname{Val}(\mu)\leq \lim_{n\rightarrow\infty}\operatorname{Val}_n(\mu_n).\] If the limit is finite,  without loss of generality, we can assume that 
		the whole sequence $\{\operatorname{Val}_n(\mu_n)\}_{n=1}^\infty$ converges to some number $\theta$.
		By Theorem~\ref{th:optima_existence}, for each natural $n$, there exists a control $\xi_n\in\mathcal{U}$ such that 
		$$\theta_n\triangleq\tau(m_n(\cdot;\mu_n,\xi_n))=\operatorname{Val}_n(\mu_n).$$ Additionally, for sufficiently large $n$ one has that $\tau(m_n(\cdot;\mu_n,\xi_n))<2\theta$. Since $\mathcal{U}$ is compact, passing, if
		necessary, to a subsequence, we have that 
		$\xi_n\rightarrow \xi$ as $n\rightarrow\infty$. Thus,  Proposition~\ref{lm0} gives that $\{m_n(\cdot;\mu_n,\xi_n)\}_{n=1}^\infty$ converges to $m(\cdot;\mu,\xi)$ on $C([0,2\theta],\mathcal{P}^2(\rd))$. Since,
		$m(\theta;\mu_n,\xi_n)\in M,$ whereas $M$ is closed, we have that $m(\theta;\mu,\xi)\in M$. Hence,
		\[\operatorname{Val}(\mu)\leq \tau(m(\cdot;\mu,\xi))\leq\theta=\liminf_{n\rightarrow\infty}\operatorname{Val}_n(\mu_n).\]
		This proves the first property of the $\Gamma$-convergence. 
		
		To prove the second property, we choose a measure $\mu\in \mathcal{P}^2(\mathbb{R}^d)$. First, we assume that $\operatorname{Val}(\mu)<\infty$. As above, by Theorem~\ref{th:optima_existence}, one can find a control $\xi\in\mathcal{U}$, such that  $\operatorname{Val}(\mu)=\tau(m(\cdot;\mu,\xi))$. For shortness, we put $\theta\triangleq \operatorname{Val}(\mu)$, $\nu\triangleq m(\theta,\mu,\xi)$. Notice that $\nu\in M$. Now, let $m'_n(\cdot):[0,\theta]\rightarrow \mathcal{P}^2(\rd)$ satisfy
		\[\partial_t m_n'(t)+\operatorname{div}\Bigg(\int_U f_n(\cdot,m_n'(t),u)\xi(du|t)m_n'(t)\Bigg)=0,\ \ m_n'(\theta)=\nu.\] One can use time reversal to prove the existence of such a motion. Additionally, Proposition~\ref{lm0} applied in the reverse time gives that  \[\sup_{t\in [0,\theta]}W_2(m_n'(t),m(t,\mu,\xi))\rightarrow 0\text{ as }n\rightarrow \infty.\] We put $\mu_n\triangleq m_n'(0)$. By construction, $m_n(\cdot,\mu_n,\xi)=m_n'(\cdot)$. Moreover, $W_2(\mu_n,\mu)$ tends to $0$ when $n\rightarrow\infty$. In particular, $\mu_n\in G$ for sufficiently large $n$. Thus, due to the fact that $m_n'(\theta)=\nu\in M$, one has that $\tau(m_n(\cdot,\mu_n,\xi))\leq \theta.$ Hence, $$\operatorname{Val}_n(\mu_n)\leq \tau(m_n(\cdot;\mu_n,\xi))\leq \tau(m(\cdot;\mu,\xi))=\operatorname{Val}(\mu).$$ This means that  $\limsup\limits_{n\rightarrow\infty}\operatorname{Val}_n(\mu_n)\leq\operatorname{Val}(\mu). $

		If $\operatorname{Val}(\mu)=+\infty$, then, for an arbitrary sequence $\{\mu_n\}_{n=1}^\infty\in G$ converging to  $\mu\in G$, one has that $\limsup\limits_{n\rightarrow\infty}\operatorname{Val}_n(\mu_n)\leq\operatorname{Val}(\mu). $
	\end{proof}
	
		
		\section{An example}\label{sect:example}
		We consider a time-optimal problem for the nonlocal continuity equation assuming that $d=1$, $U=[-1,0]$,
		\[f(x,m,u)=u-\int_{\mathbb{R}}ym(dy),\] while
		the target set is $$M=\Bigg\{ m\in \mathcal{P}^2(\mathbb{R}): \  \int_{\mathbb{R}} ym(dy)=0 \Bigg\}.$$
		Notice that this system satisfies Hypotheses~\ref{hyp1}--\ref{hyp3}.
		
		In the examined case, nonlocal continuity equation~\eqref{dyn} takes the form:
		$$ \partial_t m(t) +\mbox{div}\Bigg(\bigg(u(t)-\int_{\mathbb{R}} ym(t,dy)\bigg)m(t) \Bigg)=0, \ m(0)=\mu\in \mathcal{P}^2(\mathbb{R}).$$
		
		The mean of the measure $m(t)$ defined by the rule: \[\bar{m}(t)\triangleq \int_{\mathbb{R}}ym(t,dy)\]
		obeys the following equation:
		\begin{equation*}\label{eq:E} \frac{d}{dt}\bar{m}(t)= u-\bar{m}(t),\ \ \bar{m}(0)=\bar{\mu}.\end{equation*} Here, given a measure $\mu$, we  put  $\bar{\mu}\triangleq \int_{\mathbb{R}}y\mu(dy)$. 
		Notice that the target set also is determined by the mean, i.e, the aim of the control is to find the first time $\tau$ such that  $\bar{m}(\tau)= 0$.
		Given an open-loop control $u(\cdot)$, we have that the corresponding evolution of the mean is $$ \bar{m}(t)=\Bigg(\bar{\mu}+\int_0^tu(t')e^{t'}dt'\Bigg)e^{-t}.$$ Thus, the optimal control is as follows:
		\begin{enumerate}
			\item  $u(\cdot)\equiv -1$ if  $\bar{\mu}\geq0$; 
			\item no optimal control if  $\bar{\mu}<0$;
		\end{enumerate} whereas  the value function is
		\[
		\operatorname{Val}(\mu)=\left\{ \begin{array}{ll}
			\ln(\bar{\mu}+1), & \bar{\mu}\geq 0, \\
			+\infty, & \text{otherwise.}
		\end{array}
		\right.
		\]
		
		Notice that the value function is only lower semicontinuous.
		
		Now, let us write down the  Hamiltonian corresponding to the examined problem. It has the form:
		\begin{equation*}
			\begin{split}
				H(m,s)&=\inf\limits_{u\in[-1;0]}
				\int_{\mathbb{R}} s(x) \Bigl(u-\int_{\mathbb{R}} ym(dy)\Bigr) m(dx)  \\
				&= 
				\begin{cases}
					\Bigl(-1 -\bar{m}\Bigr)\int_{\mathbb{R}} s(x)  m(dx) , &  \int_{\mathbb{R}} s(x)  m(dx)>0,\\
					-\bar{m}\int_{\mathbb{R}} s(x)  m(dx), & \int_{\mathbb{R}} s(x)  m(dx)\leq 0 .
				\end{cases}
			\end{split}
		\end{equation*} Thus, the Dirichlet problem is
		\[H(m,\nabla\phi(m))+1-\phi(m)=0,\ \ \int_{\mathbb{R}}ym(dy)\neq 0;\ \ \phi(m)=0\text{ when } \int_{\mathbb{R}}ym(dy)= 0.\]
		
		Due to  the Kruzhkov transform of the function $\operatorname{Val}$,
		the viscosity solution is equal to
		$$ \phi(\mu)=\begin{cases}
			1-e^{-\ln(1+\bar{\mu})}, &  \bar{\mu}\geq0,\\
			1, & \bar{\mu}<0.
		\end{cases}$$
		
	\begin{acknowledgement} The article was prepared within the framework of the HSE University Basic Research Program. We sincerely thank the anonymous reviewer for their valuable comments and suggestions.
	\end{acknowledgement}
		
		\appendix
		
		\section{Properties of relaxed controls}\label{sect:A}
		
		Recall that, if $\mu\in\mathcal{P}^2(\mathbb{R}^d)$, 
		\[\varsigma(\mu)\triangleq \Bigg[\int_{\rd}\|x\|^2\mu(dx)\Bigg]^{1/2}.\] Furthermore, for given  $T>0$, $\mu\in\mathcal{P}^2(\rd)$, $\xi\in \mathcal{U}_T$ and $m(\cdot)\triangleq m(\cdot;\mu,\xi)$, we denote by $X(\cdot;y,\mu,\xi)$ a solution of the following initial value problem:
		\begin{equation}\label{A:intro:X}
			\frac{d}{dt}x(t)=\int_U f(x(t),m(t),u)\xi(du|t),\ \ x(0)=y.
		\end{equation}

		\begin{proposition}\label{A:prop:continuity} The following estimates hold true for every $t\in [0,T]$, $y\in\rd$ and some constants dependent only on $\varsigma(\mu)$ and $T$:
			\begin{enumerate}
				\item $\varsigma(m(t))\leq c_0$;
				\item $W_2(m(t),\mu)\leq c_1 t$;
				\item $\|X(t;y,\mu,\xi)\|\leq c_3(1+\|y\|)$;
				\item $\|X(t;y,\mu,\xi)-y\|\leq c_4(1+\|y\|)t$.
			\end{enumerate}     
		\end{proposition}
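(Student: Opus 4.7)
The plan is to invoke the superposition principle from Proposition~\ref{prop:superposition} so that $m(t)=e_t\sharp\chi$ is generated by the characteristic flow $y\mapsto X(t;y,\mu,\xi)$, and then to close the four estimates one at a time via Gronwall's inequality. The key preliminary observation is that Hypothesis~\ref{hyp2} and compactness of $U$ yield a linear growth bound for $f$: setting $C_0\triangleq \max_{u\in U}\|f(0,\delta_0,u)\|$ (finite by continuity of $f$ and compactness of $U$), and using $W_2(m,\delta_0)=\varsigma(m)$, we get
\[\|f(x,m,u)\|\leq C_0+L\|x\|+L\varsigma(m).\]

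Step 1 (estimate (1), the main coupling step): Abbreviate $X_t(y)\triangleq X(t;y,\mu,\xi)$ and $r(t)\triangleq \varsigma(m(t))$. Differentiating $\|X_t(y)\|^2$ along~\eqref{A:intro:X}, bounding the integrand via the linear-growth estimate above, and using Young's inequality $2ab\leq a^2+b^2$ to absorb the cross terms, one obtains a pointwise-in-$y$ inequality of the form
\[\frac{d}{dt}\|X_t(y)\|^2\leq \alpha_1+\alpha_2\|X_t(y)\|^2+\alpha_3 r(t)^2,\]
with constants $\alpha_i$ depending only on $C_0$ and $L$. Since $r(t)^2=\int_{\rd}\|X_t(y)\|^2\mu(dy)$ by the superposition principle, integrating against $\mu$ yields a closed scalar inequality
\[r(t)^2\leq r(0)^2+\alpha_1 t+(\alpha_2+\alpha_3)\int_0^t r(s)^2\,ds,\]
and Gronwall gives $r(t)\leq c_0$ on $[0,T]$ with $c_0=c_0(\varsigma(\mu),T)$.

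Step 2 (estimates (3) and (4)): Once $r(t)$ is controlled, the characteristic equation becomes an ordinary ODE with autonomous linear growth: $\|\dot X_t(y)\|\leq C_0+L c_0+L\|X_t(y)\|$. A standard scalar Gronwall argument delivers $\|X_t(y)\|\leq c_3(1+\|y\|)$, proving (3). Substituting this back and integrating $\|\dot X_t(y)\|$ over $[0,t]$ yields
\[\|X_t(y)-y\|\leq (C_0+Lc_0)t+L\int_0^t c_3(1+\|y\|)\,ds\leq c_4(1+\|y\|)t,\]
which is (4).

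Step 3 (estimate (2)): The map $y\mapsto (y,X_t(y))$ pushes $\mu$ forward to a transport plan $\pi_t\in\Pi(\mu,m(t))$ (generally not optimal), so by the very definition of $W_2$ and by~(4),
\[W_2^2(m(t),\mu)\leq \int_{\rd}\|X_t(y)-y\|^2\mu(dy)\leq c_4^2 t^2\int_{\rd}(1+\|y\|)^2\mu(dy)\leq 2c_4^2(1+\varsigma(\mu)^2)t^2,\]
so (2) holds with $c_1\triangleq c_4\sqrt{2(1+\varsigma(\mu)^2)}$. The main obstacle is really just Step~1: the pointwise estimate of $\|X_t(y)\|$ and the bound on $\varsigma(m(t))$ are mutually entangled, and the only way to break the circularity is to integrate the pointwise differential inequality against $\mu$ before applying Gronwall, so that the right-hand side of the Gronwall inequality depends only on $r(\cdot)$ itself. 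Everything else is routine.
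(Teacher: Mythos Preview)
Your proof is correct and follows essentially the same strategy as the paper: derive a linear growth bound on $f$ from Hypothesis~\ref{hyp2}, close a Gronwall inequality for $\varsigma(m(t))$ first (using that $\varsigma(m(t))$ equals the $L^2(\mu)$-norm of $X_t$), and then read off the remaining three estimates. The only cosmetic differences are that the paper works with the integral form of the ODE and applies the Minkowski inequality directly to $\varsigma(m(t))=\|X(t;\cdot,\mu,\xi)\|_{L^2(\mu)}$, whereas you differentiate $\|X_t(y)\|^2$ and use Young's inequality before integrating; and the paper obtains~(2) straight from~(1) and the integral representation rather than routing through~(4).
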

		\begin{proof}
			The proof relies on the integral representation of~\eqref{A:intro:X} 
			\begin{equation}\label{A:equality:X_integral}X(\cdot;y,\mu,\xi)=y+\int_0^t \int_U f(X(t';y,\mu,\xi),m(t'),u)\xi(du|t')dt'\end{equation} and the upper bound on $f$ that directly follows from Hypothesis~\ref{hyp2}
			\begin{equation}\label{A:ineq:f}
				\|f(x,m,u)\|\leq C'(1+\|x\|+\varsigma(m)).
			\end{equation} Here, $C'$ is a constant. 
			
			Using this, along with the fact that $\varsigma(m(t))=\|X(t;\cdot,\mu,\xi)\|_{L^2(\mu)}$ and the Minkowski inequalities, we conclude that 
			\[\varsigma(m(t))\leq \varsigma(\mu)+C'\int_0^t(1+2\varsigma(m(t'))dt'.\] This and the Gronwall's inequality imply the first statement of the proposition.
			
			To show the second statement, it suffices to notice that $W_2(m(t),\mu)\leq \|X(t;\cdot,\mu,\xi)-\operatorname{Id}\|_{L^2(\mu)}$, use estimate~\eqref{A:ineq:f} and the first statement.
			
			The third and the fourth statements directly follow from integral representation of  solution~\eqref{A:equality:X_integral}, estimate~\eqref{A:ineq:f}, the first statement of this proposition and the Gronwall's inequality.
		\end{proof}
		\begin{remark} One can use Hypothesis~\ref{hyp3} and prove that 
			\[\int_{\rd}\mathscr{a}(x)m(t,dx)\leq c_0'\] for some constant $c_0'$. Here, $\mathscr{a}(x) = \sqrt{1 + \|x\|^2}$ (as defined in~\eqref{intro:almost_norm}).
		\end{remark}

		\begin{proposition}\label{A:prop:F_limits} Let 
			\begin{itemize}
				\item $\mu\in\mathcal{P}^2(\rd)$;
				\item $T>0$;
				\item for each $h\in [0,T]$ $\xi_h\in\mathcal{U}_h$;
				\item $F_h(y)\triangleq h^{-1}\int_{[0,h]\times U} f(X(t;y,\mu,\xi_h),m(t;\mu,\xi_h),u)\xi_h(d(t,u))$.
			\end{itemize}Then, there exists  a sequence $\{h_k\}_{k=1}^\infty$   and  a probability $\zeta\in\mathcal{P}(U)$ such that $h_k\rightarrow 0$, and  $F_{h_k}\rightarrow F$ as $k\rightarrow\infty$, where $F\in \SL{\mu}$ is defined by the rule: $F(y)\triangleq \int_U f(y,\mu,u)\zeta(du)$. If, additionally, one can find a probability $\zeta_0\in\mathcal{P}(U)$ such that $\xi_h(\cdot|t)=\zeta_0$, then $\zeta=\zeta_0$ and $F_h\rightarrow F$ as $h\rightarrow 0$.   
		\end{proposition}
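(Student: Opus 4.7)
The plan is to decompose $F_h - F$ into two contributions: one that accounts for replacing the trajectory $X(t;y,\mu,\xi_h)$ by the initial point $y$ and the curve $m(t;\mu,\xi_h)$ by the initial measure $\mu$ (which is small because these variations are of order $h$), and one that records the difference between the time-averaged control and its limit. The limit is obtained by selecting a cluster point of the averaged controls $\zeta_h\in \mathcal{P}(U)$ defined by
\[\zeta_h(A)\triangleq \frac{1}{h}\xi_h([0,h]\times A),\qquad A\text{ Borel in }U,\]
whose existence is ensured by the compactness of $\mathcal{P}(U)$ in the narrow topology ($U$ is a metric compact).

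More concretely, set $G_h(y)\triangleq \int_U f(y,\mu,u)\zeta_h(du)$ and write $F_h=G_h+R_h$, where
\[R_h(y)= \frac{1}{h}\int_{[0,h]\times U}\!\!\bigl[f(X(t;y,\mu,\xi_h),m(t;\mu,\xi_h),u)-f(y,\mu,u)\bigr]\xi_h(d(t,u)).\]
By Hypothesis~\ref{hyp2} together with the bounds $\|X(t;y,\mu,\xi_h)-y\|\leq c_4(1+\|y\|)t$ and $W_2(m(t;\mu,\xi_h),\mu)\leq c_1 t$ from Proposition~\ref{A:prop:continuity}, one obtains the pointwise estimate $|R_h(y)|\leq C(1+\|y\|)h$ for $t\in[0,h]$. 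Since $\mu\in\mathcal{P}^2(\rd)$, integration gives $\|R_h\|_{L^2(\mu)}=O(h)$, which tends to $0$ as $h\downarrow 0$.

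For the averaging piece, extract a subsequence $h_k\downarrow 0$ with $\zeta_{h_k}\to\zeta$ narrowly and set $F(y)\triangleq\int_U f(y,\mu,u)\zeta(du)$. Applying Lemma~\ref{A:lemma:zeta_lim} to $g(\cdot,u)\triangleq f(\cdot,\mu,u)$, which is continuous and of sublinear growth by~\eqref{A:ineq:f}, yields $\|G_{h_k}-F\|_{L^2(\mu)}\to 0$. Combining with the remainder estimate gives $\|F_{h_k}-F\|_{L^2(\mu)}\to 0$, proving the first statement. The addendum is immediate: if $\xi_h(\cdot|t)\equiv \zeta_0$ for every $h$, then $\zeta_h=\zeta_0$ for all $h$, so $G_h\equiv F$ with $\zeta=\zeta_0$ and the remainder estimate already gives convergence along every sequence $h\downarrow 0$, without extraction.

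The main obstacle is a bookkeeping one: one must carefully check that $G_h$ and $F$ actually lie in $L^2(\mu)$ and that the pointwise Lipschitz/sublinear-growth bounds pass correctly to $L^2(\mu)$-norms using the finite second moment of $\mu$. Once Proposition~\ref{A:prop:continuity} supplies the trajectory estimates and Lemma~\ref{A:lemma:zeta_lim} is invoked with $g=f(\cdot,\mu,\cdot)$, the rest is routine.
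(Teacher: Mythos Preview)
Your proof is correct and follows essentially the same approach as the paper: the paper's $F'_h$ is your $G_h$, the remainder $F_h-F'_h$ is your $R_h$ (bounded by $O(h)$ via Proposition~\ref{A:prop:continuity} and the Lipschitz condition), and both arguments then extract a narrow cluster point $\zeta$ of $\zeta_h$ and apply Lemma~\ref{A:lemma:zeta_lim} to pass to the limit in $G_h$. Your write-up is in fact slightly more explicit about the pointwise remainder estimate than the paper's.
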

		\begin{proof}
			First, notice that due to Proposition~\ref{A:prop:continuity} and Hypothesis~\ref{hyp2}, there exists a constant $C$ such that, for each $h\in [0,T]$ and $y\in\rd$, 
			\[\|F_h(y)\|\leq C(1+\|y\|)\] whenever $y\in\rd$.
			Furthermore, we define the measure $\zeta_h\in\mathcal{P}(U)$ by the rule: for every Borel set  $\Gamma\subset U$, $$\zeta_h(\Gamma)\triangleq h^{-1}\xi_h([0,h]\times\Gamma).$$ 
			Since $U$ is compact, there exists a sequence $\{h_k\}_{k=1}^\infty$ converging to zero such that $\{\zeta_{h_k}\}_{k=1}^\infty$ narrowly converges to some $\zeta\in \mathcal{P}(U)$. Due to   Hypothesis~\ref{hyp2},
			\[\begin{split}
			\|F_{h_k}(y)-F(y)\|\leq h_k^{-1}L\int_0^{h_k}\big(&\|X(t,y,\mu,\xi_{h_h})-y\|dt+ W_2(m(t,\mu,\xi_{h_k}),\mu)\big)dt\\&+\Bigg\|\int_{U}f(y,\mu,u)\zeta_{h_k}(du)-\int_{U}f(y,\mu,u)\zeta_0(du)\Bigg\|.\end{split}\]
			The first term tends to zero thanks to Proposition~\ref{A:prop:continuity}. Since $\zeta_{h_k}$ converges to $\zeta$ narrowly, we have that the second term also vanishes. Hence, $F_{h_k}\rightarrow F$ in $\SL{\mu}$.
			
			In the case where $\xi_h(\cdot|t)=\zeta_0$, we, as above, by Hypothesis~\ref{hyp2},  have that 
			\[\|F_h(y)-F(y)\|\leq h^{-1}L\int_0^h\big(\|X(t,y,\mu,\xi_h)-y\|+W_2(n(t,\mu,\xi_h),\mu)\big)dt.\] The desired convergence directly follows from this and Proposition~\ref{A:prop:F_limits}.
		\end{proof}

\section{Moment gauge function}\label{sect:B}	
The main aim of this section is to prove Lemmas~\ref{B:lm:g},~\ref{B:lm:derivative_g}.

\begin{proof}[Proof of Lemma~\ref{B:lm:g}]
	First, notice that since $m$ is uniformly integrable, then 
	\[\lim_{R\rightarrow \infty}\int_{\mathscr{a}(x)\geq R}\mathscr{a}^2(x)m(dx)\rightarrow 0.\] 
	
	Now we construct an increasing sequence $\{R_n\}_{n=0}^\infty$ inductively. 
	First, we put $R_0\triangleq 0$, $R_1\triangleq 1$.
	 If we already constructed numbers $R_1<\ldots< R_{n}$, then we choose $R_{n+1}\geq (R_{n}+1) $  
	    such that
	\begin{equation}\label{B:ineq:R_n}
		\int_{\mathscr{a}(x)\geq R_{n+1}}\mathscr{a}(x)m(dx)\leq (n+1)^{-1} 2^{-n}
	\end{equation} and
	\[nR_{n+1}\geq (n+2) R_n.\] Notice that the latter inequality can be rewritten in the equivalent form
	\begin{equation}\label{B:ineq:R_n_g_0}
		\frac{1}{R_{n+1}-R_n}\leq \frac{n+2}{2R_{n+1}}.
	\end{equation}  Now we define $g_0(r)$ by the rule:
	\begin{itemize}
		\item if $r\in [R_0,R_1]$, then $g(r)\triangleq 3$;
		\item if $r\in [R_n,R_{n+1}]$ for some $n\geq 1$, then we put 
		\[g_0(r)\triangleq (n+2) +(r-R_n)(R_{n+1}-R_n)^{-1}.\] 
	\end{itemize}

		Notice that the function $g_0$ is positive, monotonously increasing on $[1,+\infty)$ and divergent. 
	Since $g_0(r)\leq n+3$ when $r\in [R_{n},R_{n+1}]$, we have that
	\begin{equation}\label{B:ineq:int}
		\begin{split}
		\int_{\rd}&\mathscr{a}^2(x)g_0(\mathscr{a}(x))m(dx)\\&=\sum_{n=1}^\infty\int_{R_{n}\leq \mathscr{a}(x)<R_{n+1}}\mathscr{a}^2(x)g_0(\mathscr{a}(x))m(dx) 
		\\&\leq \sum_{n=1}^\infty\int_{R_{n}\leq \mathscr{a}(x)<R_{n+1}}\mathscr{a}^2(x)(n+3)m(dx)\\&\leq 4\int_{\rd} \mathscr{a}^2(x)m(dx)+\sum_{n=2}^\infty \int_{R_{n}\leq \mathscr{a}(x)}\mathscr{a}^2(x)n m(dx).
		\end{split}
	\end{equation} From this, the fact that $\int_{\rd} \mathscr{a}^2(x)m(dx)<\infty$ and \eqref{B:ineq:R_n}, we conclude that $\int_{\rd}\mathscr{a}^2(x)g_0(\mathscr{a}(x))m(dx)$ is finite.
	
	The function $g_0$ is differentiable on each interval $(R_n,R_{n+1})$. The derivative on each interval $(R_n,R_{n+1})$ is constant and equal to $A_n$, where $A_0=0$ and  $A_n\triangleq (R_{n+1}-R_n)^{-1}$ for $n\geq 1$. Notice that $A_n\in [0,1]$. From~\eqref{B:ineq:R_n_g_0}, we have that, if $r\in [R_n,R_{n+1}]$, $n\geq 0$, then 
	\begin{equation}\label{B:ineq:A}A_n\leq \frac{n+2}{2R_{n+1}}\leq \frac{g_0(r)}{2r}.\end{equation} 
	
	Unfortunately, the function $g_0$ is only piecewise smooth. Thus, we use the following trick  borrowed from the calculus of variation theory. Let 
	\[\alpha(r)\triangleq \begin{cases}
		(1-|r|^2)/4, & |r|\leq 1\\
		0, & \text{otherwise}
	\end{cases}\] Given natural $n$, let $\epsilon_n\in (0,1/2)$ be such that, for each $r\in [R_n-\epsilon_n,R_n+\epsilon_n]$, \[A_n\vee A_{n-1}\leq 3g_0(r)/(4r).\] The existence of such $\epsilon_n$ follows from the fact that $A_n\vee A_{n-1}\leq g_0(R_n)/2R_n$ (see~\eqref{B:ineq:A}).
	Notice that $|A_n-A_{n-1}|\leq 1\leq g_0(r)$.  Moreover, the intervals $[R_n-\epsilon_n,R_n+\epsilon_n]$ do not intersect.
	
	Now we set 
	\[g(r)\triangleq g_0(r)+\sum_{n=1}^\infty \epsilon_n(A_n-A_{n-1})\alpha\Bigg(\frac{r-R_n}{\epsilon_n}\Bigg).\] Notice that,  $g(r)=g_0(r)$ outside the intervals $[R_n-\epsilon_n,R_n+\epsilon_n]$; whereas, on each interval $[R_n-\epsilon_n,R_n+\epsilon_n]$, $g(r)=g_0(r)+\epsilon_n(A_n-A_{n-1})\alpha((r-R_n)/\epsilon_n)$. Since each $A_n\in [0,1]$, $\epsilon_n\in (0,1/2)$, form~\eqref{B:ineq:int} we have that 
	\begin{equation*}\label{B:ineq:int_g_fin}
	\begin{split}
		\int_{\rd}\mathscr{a}^2&(x)g(\mathscr{a}(x))m(dx)\\&\leq \int_{\rd}\mathscr{a}^2(x)g_0(\mathscr{a}(x))m(dx)+\frac{1}{8}\int_{\rd}\mathscr{a}^2(x)m(dx)<\infty.\end{split}
	\end{equation*} This proves \ref{B:cond:G_int}.

	The very definition of the function $g$ gives that the function $g$ is continuously differentiable on  $(0,+\infty)\setminus \cup_{n=1}^\infty [R_n-\epsilon_n,R_n+\epsilon_n]$. Moreover, for $r\in (R_{n-1}+\epsilon_{n-1},R_n-\epsilon_n)$,  $g'(r)=g_0'(r)=A_{n-1}$.  If $r\in [R_n-\epsilon_n,R_n+\epsilon_n]$, then direct calculations gives that 
	\begin{equation}\label{B:equality_n}
		g'(r)=(A_n+A_{n-1})/2+\frac{r-R_n}{2\epsilon_n}(A_n-A_{n-1}).
	\end{equation} This gives that $g'(\cdot)$ is continuous on $[0,+\infty)$ and positive on $[R_1-\epsilon_1,+\infty)$. The fact that $g(r)\rightarrow\infty$ as $r\rightarrow\infty$ follows from this, and definitions of the functions $g$, $g_0$. Thus, we obtain~\ref{B:cond:G_positive}.
	
	
	It remains to show that condition~\ref{B:cond:G_derivative} holds true. When $r\in (0,+\infty)\setminus \cup_{n=1}^\infty [R_n-\epsilon_n,R_n+\epsilon_n]$, we have (see \eqref{B:ineq:A}) that $g'(r)\leq g(r)/(2r)\leq g(r)/r$. Notice that, on $[R_n-\epsilon_n,R_n+\epsilon_n]$, $g'(r)\leq A_n\vee A_{n-1}$ (see~\eqref{B:equality_n}). Taking into account the inequality $|g(r)-g_0(r)|\leq 1/4$, we conclude that 
	\[g'(r)\leq A_n\vee A_{n-1}\leq \frac{3g_0(r)}{4r}\leq \frac{g(r)}{r}.\] This completes the proof.
\end{proof}	

\begin{lemma}\label{B:lm:growth} If a function $g:(0,+\infty)\rightarrow (0,+\infty)$ satisfies conditions~\ref{B:cond:G_positive} and~\ref{B:cond:G_derivative}, then, for every $r\in (0,+\infty)$, 
	\[g(2r)\leq (1-\ln 2)^{-1}g(r).\] 
\end{lemma}
\begin{proof}
	Due to \ref{B:cond:G_derivative}, we have that
	\[g(2r)-g(r)=\int_0^1g'(r+hr)rdh\leq \int_{0}^1\frac{g(r+hr)}{1+h}dh.\] Condition \ref{B:cond:G_positive} gives that $g(r+hr)\leq g(2r)$. The  statement of the lemma directly follows from this.	
\end{proof}

\begin{proof}[Proof of Lemma~\ref{B:lm:derivative_g}]
	 For simplicity, we denote
	\[\varphi(x)=\mathscr{a}^2(x)g(\mathscr{a}(x)).\] Obviously,
	\begin{equation*}\label{B:equality:nabla_phi}\nabla\varphi(x)=\mathscr{j}_g(x).\end{equation*} Due to the definition of the function $\mathscr{a}$ (see~\eqref{intro:almost_norm}) and condition~\ref{B:cond:G_derivative}, we have that $\mathscr{j}_g\in\ID{m}$. 	
	Notice that due to~\ref{B:cond:G_int}, $V_g(m)$ is finite. Furthermore, if $C>0$, $F\in\SL{\mu}$ is such that $\|F(x)\|\leq C(1+\|x\|)$ $m$-a.e., we have that
	\[\mathscr{a}^2(x+hF(x))\leq 1+4C^2h^2+(2+4C^2h^2)\|x\|^2\ \ m\text{-a.e.}\] Hence, for sufficiently small $h$, we have that, for $m$-a.e. $x\in\rd$,
	\[\mathscr{a}(x+hF(x))\leq 4\mathscr{a}(x).\] This estimate is uniform over the class of functions $F$ satisfying the sublinear growth condition for a constant $C$. 
	
	From this and Lemma~\ref{B:lm:growth}, we have that $V_g((\operatorname{Id}+hF)\sharp m)$ is uniformly bounded whenever $h$ is sufficiently small.

	Now let $C>0$, let $\{F_n\}_{n=1}^\infty\in\SL{m}$ be a sequence converging to some function  $F\subset \SL{m}$  and let $h_n\in (0,C^{-1})$ with $h_n\rightarrow 0$. For each $x\in\rd$, we have that 
	\[\varphi(x+h_n F_n(x))-\varphi(x)=\int_0^1\langle \mathscr{j}_g(x+rh_n F_n(x)),h_n F_n(x)\rangle dr.\] Thanks to the fact that $\|\mathscr{j}_g(x)\|\leq 3g(\mathscr{a}(x))\|x\|$ and the choice of the sequence $\{F_n\}_{n=1}^\infty$, we have that the function $x\mapsto \int_0^1\langle \mathscr{j}_g(x+rh_n F_n(x)), F_n(x)\rangle dr$ is bounded by some  function that is integrable w.r.t. the measure $m$. Moreover,  for each $x$,  $\int_0^1\langle \mathscr{j}_g(x+rh_n F_n(x)), F_n(x)\rangle dr\rightarrow \langle \mathscr{j}_g(x),F(x)\rangle $ as $n\rightarrow\infty$. Thus, \[\int_{\rd}\int_0^1\langle \mathscr{j}_g(x+rh_n F_n(x)), F_n(x)\rangle dr m(dx)\rightarrow \int_{\rd}\langle \mathscr{j}_g(x), F(x)\rangle m(dx)\text{ as }n\rightarrow \infty.\] This gives that
	\[\lim_{h\downarrow 0, F'\in\SL{m}, F'\rightarrow F}\frac{\varphi(x+h F_n(x))-\varphi(x)-\langle \mathscr{j}_g,F\rangle_m}{h}=0.\]
	Using the definition of the sub- and superdifferentials, we obtain the statement of the lemma.
\end{proof}
		
\bibliography{references}
\end{document}